\newtheorem{theorem}{Theorem}
\numberwithin{theorem}{section}
\newtheorem{proposition}[theorem]{Proposition}
\newtheorem{lemma}[theorem]{Lemma}
\newtheorem{corollary}[theorem]{Corollary}
\theoremstyle{definition}
\newtheorem{definition}[theorem]{Definition}
\newtheorem{example}[theorem]{Example}
\theoremstyle{remark}
\begin{document}
\def\dim{{\rm{dim}}}
\def\rk{{\rm{rk}}}
\def\Hom{{\rm{Hom}}}
\def\End{{\rm{End}}}
\def\Aut{{\rm{Aut}}}
\def\Ext{{\rm{Ext}}}
\def\Res{{\rm{Res}}}
\def\Ann{{\rm{Ann}}}
\def\Z{\mathbb{Z}}
\def\z{\mathcal{Z}}
\def\N{\mathbb{N}}
\def\R{\mathbb{R}}
\def\O{\mathcal{O}}
\def\C{\mathscr{C}}
\def\ker{{\rm{Ker}}}
\def\im{{\rm{Im}}}
\def\char{{\rm{char}}}
\def\GL{{\rm{GL}}}
\def\F{\mathbb{F}}
\def\t{\tau}
\def\l{\lambda}
\def\L{\mathscr{L}}
\def\ind{{\rm{IND}}}
\def\sgn{{\rm{SGN}}}
\def\Ind{{\rm{Ind}}}
\def\Res{{\rm{Res}}}
\def\mod{{\rm{mod}}}
\def\S{\mathfrak{S}}
\def\soc{{\rm{soc}}}
\def\head{{\rm{head}}}
\def\rad{{\rm{rad}}}
\def\b{\mathfrak{b}}
\def\e{\mathfrak{e}}
\def\p{\mathfrak{p}}
\def\u{\mathfrak{u}}
\def\a{\alpha}
\def\St{{\rm{St}}}
\def\H{\mathscr{H}}
\def\F{\mathfrak{F}}
\def\G{\Gamma}
\def\D{\Delta}
\def\d{\delta}
\def\r{\rho}
\def\SL{{\rm{SL}}}
\def\Rad{{\rm{Rad}}}
\def\Cor{{\rm{Cor}}}

\title{Bounds on the Dimension of $\Ext$ for Finite Groups of Lie Type}

\author{Veronica Shalotenko \\ \footnotesize{vvs9cc@virginia.edu} \\ \footnotesize{Department of Mathematics, University of Virginia, Charlottesville, VA 22903}}

\maketitle

\begin{abstract}
{\footnotesize{Let $G$ be a finite group of Lie type defined in characteristic $p$, and let $k$ be an algebraically closed field of characteristic $r>0$. We will assume that $r \neq p$ (so, we are in the non-defining characteristic case). Let $V$ be a finite-dimensional irreducible left $kG$-module. In 2011, Guralnick and Tiep found bounds on the dimension of $H^1(G,V)$ in non-defining characteristic, which are independent of $V$. The aim of this paper is to generalize the work of Gurlanick and Tiep. We assume that $G$ is split and use methods of modular Harish-Chandra theory to find bounds on the dimension of $\Ext^1_{kG}(Y,V)$, where $Y$ and $V$ are irreducible $kG$-modules. We then use Dipper and Du's algorithms to illustrate our bounds in a series of examples.}}

\end{abstract}

\section{Introduction}

Let $q$ be a power of a prime $p$, let $G$ be a finite group of Lie type over the finite field of $q$ elements $\mathbb{F}_q$, and let $k$ be an algebraically closed field of characteristic $r>0$, $r \neq p$.\footnote{Finite groups of Lie type are discussed in detail in \cite{carter}.} We will work with left $kG$-modules, and all $kG$-modules will be assumed to be finite-dimensional over $k$. When $G$ is a finite group of Lie type, the $BN$-pair of $G$ is split in the sense of \cite[2.5]{carter}, i.e. this $BN$-pair satisfies the additional axioms (a) $B=UT$, where $T=B \cap N$ and $U$ is the largest normal $p$-subgroup of $B$, and (b) $\cap_{n \in N}~ nBn^{-1}=T$.\footnote{We will follow \cite{gt} and refer to $B$ as a Borel subgroup, $T$ a maximal torus, and $U$ the unipotent radical. $B$ is not a Borel subgroup in the sense of algebraic groups any more than $T$ is a maximal torus or $U$ is a unipotent radical. However, this terminology proves to be very convenient when discussing the cohomology of finite groups of Lie type.} Thus, we can take advantage of the results of modular Harish-Chandra theory outlined in \cite{gj}. \\

In this paper, we combine techniques of modular Harish-Chandra theory with Guralnick and Tiep's methods for studying $1$-cohomology in non-defining characteristic in order to find bounds on the dimension of $\Ext^1_{kG}(Y,V)$ when $Y$ and $V$ are certain irreducible $kG$-modules. In Section 4, we show that $\Ext^1_{kG}(Y,V)=0$ when $Y$ and $V$ are principal series representations belonging to distinct principal series. In Section 5, we show that $\dim~ \Ext^1_{kG}(Y,V) \leq [W:W(T,X)]~|W|+{\rm{min}}(\dim~Y,\dim~V)e$ (where $e$ is the $r$-rank of of $T$) when $Y$ and $V$ both belong to a principal series ${\rm{Irr}}_k(G|(T,X))$ for some one-dimensional $kT$-module $X$.\footnote{$W(T,X)$ is the inertia group of $X$ (see Definition \ref{inertia}).} In Section 7, we assume that $Y$ is a unipotent principal series representation and that $V$ is an irreducible $kG$-module which lies outside the unipotent principal series. Under certain additional assumptions on the group $G$, we show that there exists a parabolic subgroup $W_J$ of $W$ (which depends only on $V$) such that  $\dim~\Ext^1_{kG}(Y,V) \leq [W:W_J]$. In Section 7, we give examples of this bound in the case that $G$ is a finite general linear group. These results were originally proved in the author's thesis \cite{thesis}.

\section{Motivation}

There are three distinct cases to consider in the representation theory of $kG$ (where $G$ is a finite group of Lie type over $\mathbb{F}_q$, with $q$ a power of a prime $p$): $r=0$ (the characteristic 0 case), $r=p$ (the defining characteristic case), and $r>0$, $r \neq p$ (the non-defining, or cross-characteristic case). One goal of research in the defining and non-defining characteristic cases is to compute the dimensions of Ext groups $\Ext^i(G,V)$, where $V$ is an irreducible $kG$-module and $i \geq 1$. In the defining characteristic, it is known that such bounds exist when the rank $i$ is fixed (this is due to Cline, Parshall, and Scott \cite{cps09} and Parshall and Scott \cite{ps11} in the $i=1$ case and to Bendel, Nakano, Parshall, Pillen, Scott, and Stewart \cite{bnppss} in the $i>1$ case). \\

In 2011, Guralnick and Tiep \cite{gt} published the following bounds on 1-cohomology in non-defining characteristic. \\

\begin{theorem} (\cite[Cor. 3.3, 6.5]{gt})
Let $G$ be a finite group of Lie type defined in characteristic $p$, and let $\char(k)=r>0$, with $r \neq p$. Let $W$ be the Weyl group of $G$, and let $e$ be the Lie rank of $G$. If $V$ is an irreducible $kG$-module, then 
$$
\dim~H^1(G,V) \leq
\begin{cases}
1 &\text{ if } V^B=0 \\
|W|+e &\text{ if } V^B \neq 0
\end{cases}
$$
(where $B$ is a Borel subgroup of $G$). \\
\end{theorem}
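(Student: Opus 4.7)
The plan is to descend from $G$-cohomology down to $T$-cohomology through the Borel $B=UT$, exploiting that $|U|$ is coprime to $r$, and to control the remaining contribution via the permutation module $P:=k[G/B]=\Ind_B^G k$. First, since $U\trianglelefteq B$ has order invertible in $k$, the Hochschild--Serre spectral sequence degenerates to $H^i(B,V)\cong H^i(T,V^U)$. Writing $T=T_r\times T_{r'}$ and noting that every character $T\to k^\times$ factors through $T_{r'}$ (as $k^\times$ has no $r$-torsion), one obtains $H^1(B,V)\cong H^1(T_r,N)$ where $N:=(V^U)^{T_{r'}}$ and $N^{T_r}=V^B$. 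Since $T_r$ has $r$-rank at most the Lie rank $e$, an iterated Lyndon--Hochschild--Serre argument on the cyclic factors of $T_r$ gives $\dim H^1(T_r,N)\leq e\cdot \dim N^{T_r}=e\cdot\dim V^B$.

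Second, I would apply $\Hom_{kG}(-,V)$ to the short exact sequence $0\to P_0\to P\to k\to 0$ (with $P_0$ the augmentation ideal) and use Shapiro's lemma $\Ext^i_{kG}(P,V)\cong H^i(B,V)$ to obtain
\begin{align*}
0\to V^G\to V^B\to \Hom_{kG}(P_0,V)\to H^1(G,V)\to H^1(B,V).
\end{align*}
In the case $V^B=0$: since $T_r$ is an $r$-group acting on a characteristic-$r$ vector space, $N^{T_r}=V^B=0$ forces $N=0$, whence $H^1(B,V)=0$ and the sequence degenerates to $H^1(G,V)\cong \Hom_{kG}(P_0,V)$. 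Modular Harish--Chandra theory---specifically the disjointness of Harish--Chandra series together with a structural analysis of the radical filtration of $P=R_T^G(k)$---then yields that $V$ occurs in $\head(P_0)$ with multiplicity at most one, giving $\dim H^1(G,V)\leq 1$.

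In the case $V^B\neq 0$: $V$ is a quotient of $P$, and a direct Bruhat computation shows that $P^U$ has basis $\{UwB:w\in W\}$ with trivial $T$-action, so $V^U$ is a trivial $kT$-module of dimension $\dim V^B$. Since $V^B$ is a simple module over the Iwahori--Hecke algebra $\End_{kG}(P)^{\mathrm{op}}$, which has $k$-dimension $|W|$, we get $\dim V^B\leq |W|$, and moreover $\dim H^1(B,V)=e\cdot\dim V^B$. Combined with the long exact sequence, the main obstacle is refining the crude resulting estimate of order $|W|\cdot e$ down to the sharp bound $|W|+e$: this requires an accounting that exploits the Hecke-module structure on $V^B$ to show that the map $V^B\to \Hom_{kG}(P_0,V)$ absorbs nearly all of the $\dim V^B$ factor occurring in $H^1(B,V)$, so that the cokernel contributing to $H^1(G,V)$ has dimension at most $e$ rather than $e\cdot\dim V^B$. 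This absorption step---essentially showing that most $T$-cohomology classes coming from $V^U$ already lift to $\Ext^1$ classes visible in $\Hom_{kG}(P_0,V)$---is the heart of the argument.
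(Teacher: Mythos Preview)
A preliminary remark: this theorem is quoted from Guralnick--Tiep and is not given an independent proof in the paper. The paper's own methods recover only the case $V^B\neq 0$, as the specialization $Y=k$ of Corollary~\ref{shortened}; for $V^B=0$ the paper's Harish--Chandra approach (Theorem~\ref{boundp}) yields only the weaker bound $[W:W_J]$, and only under the extra hypothesis that $(G,k)$ has property~(P).

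\textbf{Case $V^B=0$.} Your reduction to $H^1(G,V)\cong\Hom_{kG}(P_0,V)$ is correct and coincides with Theorem~\ref{headcount} (specialized to $Y=k$, $X=k$). But your assertion that ``disjointness of Harish--Chandra series together with a structural analysis of the radical filtration of $P$'' gives $[\head(P_0):V]\leq 1$ is a black box containing the entire difficulty. Guralnick--Tiep's proof of this bound proceeds through minimal parabolic subgroups and a detailed case analysis; it is not a consequence of Harish--Chandra disjointness, and nothing in your sketch indicates how the multiplicity $1$ would emerge. The paper does not prove this bound either.

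\textbf{Case $V^B\neq 0$.} Here there is a concrete missing idea. Your long exact sequence correctly gives
\[
\dim H^1(G,V)\ \leq\ [\head(P_0):V]\ +\ \dim H^1(B,V)\ \leq\ |W|\ +\ e\cdot\dim V^B,
\]
using $[P:V]\leq|W|$ (this is Lemma~\ref{countagain}). The ``absorption'' you propose---that the map $V^B\to\Hom_{kG}(P_0,V)$ cancels part of $H^1(B,V)$---cannot work: that map sits in an earlier position of the long exact sequence and has no bearing on the image of $H^1(G,V)\to H^1(B,V)$. What actually closes the gap is \emph{duality}. One has $\Ext^1_{kG}(k,V)\cong\Ext^1_{kG}(V^*,k)$, and since $V^*$ also lies in ${\rm{Irr}}_k(G|B)$ it is a quotient of $P$. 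Running your same argument with $V^*$ in the first variable and $k$ in the second now gives
\[
\dim\Ext^1_{kG}(V^*,k)\ \leq\ [P:k]\ +\ \dim H^1(B,k)\ \leq\ |W|+e,
\]
because $H^1(B,k)\cong H^1(T_r,k)$ has dimension $e$, with no factor of $\dim V^B$. This duality trick is exactly how the paper obtains the additive bound: Theorem~\ref{bigthm} gives $|W|+(\dim V)e$, and Corollary~\ref{smallerbound} swaps the arguments via $\Ext^1_{kG}(Y,V)\cong\Ext^1_{kG}(V^*,Y^*)$ to replace $\dim V$ by $\min(\dim Y,\dim V)=\dim k=1$.
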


An irreducible $kG$-module $V$ satisfies the condition $V^B \neq 0$ if and only if $V$ belongs to the unipotent principal Harish-Chandra series $\text{Irr}_k(G|B)$ (see Section 3.4). Hence, \cite[Cor. 3.3, 6.5]{gt} may be restated as follows: given an irreducible $kG$-module $V$,
$$\dim~ \Ext^1_{kG}(k,V) \leq
\begin{cases}
1 &\text{ if  } V \not\in \text{Irr}_k(G|B) \\
|W|+e &\text{ if } V \in \text{Irr}_k(G|B). \\
\end{cases} 
$$

The work presented in this paper stems from the observation that Guralnick and Tiep's bounds can be interpreted in terms of Harish-Chandra theory. In Sections 4-7, we use techniques of modular Harish-Chandra theory to generalize \cite[Cor. 3.3, 6.5]{gt} and find bounds on the dimension of $\Ext^1$ between irreducible $kG$-modules.

\section{Modular Harish-Chandra Theory}

Our summary of Harish-Chandra theory is based on \cite[Sec. 4.2]{gj}. As above, let $G$ be a finite group of Lie type defined in characteristic $p$, and let $k$ be an algebraically closed field of characteristic $r>0$, $r \neq p$ (for the remainder of this paper, we will restrict our attention to the non-defining characteristic case). Let $(W,S)$ be the Coxeter system corresponding to the $BN$-pair structure of $G$.

\subsection{Harish-Chandra Induction and Restriction}

Given a subset $J \subseteq S$, let $P_J$ be the standard parabolic subgroup of $G$ corresponding to $J$. Let $U_{P_J}$ be the largest normal $p$-subgroup of $P_J$, and let $L_J$ be a Levi subgroup of $P_J$. In the notation of \cite[4.2.1]{gj}, let $\mathscr{P}_G=\{ ^{n}{P_J}~ |~ J \subseteq S, ~n \in N \}$ and $\L_G= \{^{n}{L_J} ~ |~ J \subseteq S, ~n \in N \}$.\footnote{For any any subgroup $H \leq G$, $^{n}H=nHn^{-1}$.} \\

Let $n \in N$ and let $X$ be a (left) $kL$-module. We can define a $k({^{n}{L}})$-module structure on $X$ by setting $nln^{-1}.x=l.x$ for any $l \in L$ and $x \in X$. The resulting $k({^{n}{L}})$-module will be denoted by ${^{n}{X}}$. Let $w \in W=N/T$ and let $n \in N$ be a representative of $w$. In this case, we define ${^{w}{L}} :={^{n}{L}}$ and ${^{w}{X}} :={^{n}{X}}$ (${^{n}{L}}$ and ${^{n}{X}}$ are well-defined since ${^{nt}{L}}={^{n}{L}}$ and ${^{n}{X}} \cong {^{nt}{X}}$ for any $n \in N \leq N_G(T)$ and $t \in T$). \\

Let $P \in \mathscr{P}_G$ and let $L \in \L_G$ be a Levi complement in $P$; in this case, $P=U_P \rtimes L$. Let $kL-\mod$ denote the category of (finite dimensional) left $kL$-modules, and let $kG-\mod$ denote the category of (finite dimensional) left $kG$-modules. There is a Harish-Chandra induction functor
\begin{equation}\label{18harishinduction} 
R_{L \subseteq P}^G: kL-\text{mod} \to kG-\text{mod},
\end{equation}
defined by $R_{L \subseteq P}^G(X)=\Ind_P^G( \tilde{X})$ for all $X \in kL-\mod$, where $\tilde{X}$ denotes the inflation of $X$ from $L$ to $P$ via the surjective homomorphism $P \to L$ with kernel $U_P$. \\

There is also a Harish-Chandra restriction functor
\begin{equation}\label{18harishrestriction}
^*{R}^G_{L \subseteq P}: kG-\text{mod} \to kL-\text{mod}.
\end{equation}
Given a $kG$-module $Y$, $^*{R}^G_{L \subseteq P} (Y)=Y^{U_P}$ (which has the structure of a $kL$-module since $U_P$ is a normal subgroup of $P$). \\

A key feature of Harish-Chandra induction is the following independence property, which was proved by Howlett and Lehrer \cite{hl} and Dipper and Du \cite{ddother}. Let $L, M \in \L_G$, and suppose that $L$ is a Levi complement of $P \in \mathscr{P}_G$ and $M$ is a Levi complement of $Q \in \mathscr{P}_G$. Let $X \in kL-\mod$ and $X' \in kM-\mod$. If $M={^{n}{L}}$ and $X' \cong {^n{X}}$ for some $n \in N$, then $R_{L \subseteq P}^G(X) \cong R_{M \subseteq Q}^G(X')$. As a particular application of this independence property, we have that the Harish-Chandra induction functor $R_{L \subseteq P}^G$ is independent of the choice of parabolic subgroup $P \in \mathscr{P}_G$ containing $L$. Similarly, the Harish-Chandra restriction functor $^*{R}^G_{L \subseteq P}$ is independent of the parabolic subgroup $P$ containing $L$. Therefore, we will omit the parabolic subgroup $P$ and write $R_L^G$ and $^*{R}^G_L$ for the Harish-Chandra induction and restriction functors.

\subsection{Some Properties of Harish-Chandra Induction and Restriction}

\paragraph{Adjointness.}

For any Levi subgroup $L \in \L_G$, $R_L^G$ and $^*{R}^G_L$ are exact. The functors $R_L^G$ and  $^*{R}^G_L$ are each other's two-sided adjoints.

\paragraph{Transitivity.}

Harish-Chandra induction and restriction are transitive. Suppose that $L, M \in \L_G$ are such that $L \subseteq M$. If $X \in kL-\mod$, then $R_L^G(X) \cong R_M^G(R_L^M(X)).$ If $Y \in kG-\mod$, then $^*{R}^G_L(Y) \cong {^*{R}}^M_L(^*{R}^G_M(Y)).$

\paragraph{Mackey decomposition.}

As in the case of ordinary induction and restriction, we have a Mackey decomposition formula for Harish-Chandra induction and restriction. Suppose $L, M \in \L_G$ are Levi complements of the parabolic subgroups $P, Q \in \mathscr{P}_G$, respectively. Let $X$ be a $kL$-module, and let $D(Q,P)$ denote a full set of $(Q,P)$-double coset representatives in $G$.  The Mackey formula provides the following direct sum decomposition of the $kM$-module $^{*}{R}^G_M(R_L^G(X))$:
$$
^{*}{R}^G_M(R_L^G(X)) \cong \underset{n \in D(Q,P)}{\bigoplus} R^M_{{^{n}{L}} \cap M} ({^{*}{R}}^{^{n}{L}}_{{^{n}{L}} \cap M}({^{n}{X}})).
$$

\paragraph{Harish-Chandra induction and the linear dual.}

Let $L \in \L_G$ be the Levi complement of a parabolic subgroup $P \in \mathscr{P}_G$, and suppose that $X$ is a left $kL$-module. Let $X^*$ be the $k$-linear dual of $X$, viewed as a right $kL$-module. Since ordinary induction commutes with taking duals in the case of finite groups, we have $(R_L^G(X))^* \cong R_L^G(X^*)$.

\subsection{Cuspidal Modules and Harish-Chandra Series.} 

A $kG$-module $Y$ is called cuspidal if $^*{R}^G_L(Y)=0$ for all $L \in \L_G$ such that $L \subsetneq G$. (This definition extends to $kL$-modules for any $L \in \L_G$; a $kL$-module $X$ is cuspidal if $^*{R}^L_{L'}(X)=0$ for all $L' \in \L_G$ such that $L' \subsetneq L$.) \\

Let ${\rm{Irr}}_k(G)$ denote a full set of non-isomorphic irreducible $kG$-modules. Given a pair $(L,X)$ with $L \in \L_G$ and $X$ an irreducible cuspidal $kL$-module, let ${\rm{Irr}}_k(G|(L,X))$ be the subset of ${\rm{Irr}}_k(G)$ consisting of all $Y \in {\rm{Irr}}_k(G)$ such that $L \in \L_G$ is minimal with $^*{R}^G_L(Y) \neq 0$ and $X$ is a composition factor of $^*{R}^G_L(Y)$. The set ${\rm{Irr}}_k(G|(L,X))$ is the Harish-Chandra series corresponding to the pair $(L,X)$. If $Y \in {\rm{Irr}}_k(G|(L,X))$, we will say that $L$ is a Harish-Chandra vertex of $Y$ and that $X$ is a Harish-Chandra source of $Y$ (this terminology is used in \cite{ddother} and \cite{dd}). \\

We summarize several properties of Harish-Chandra series, which are stated in \cite[Sec. 4.2]{gj} and were originally proved by Hiss \cite{hiss3}. \\

\begin{proposition} (Properties of Harish-Chandra series) 
\begin{enumerate}[(a)]
\item
Let $L,L' \in \L_G$, let $X$ be a cuspidal irreducible $kL$-module, and let $X'$ be a cuspidal irreducible $kL'$-module. Then, ${\rm{Irr}}_k(G|(L,X))={\rm{Irr}}_k(G|(L',X'))$ if and only if there exists some $n \in N$ with $L'={^n{L}}$ and $X' \cong {^n{X}}$.
\item
The Harish-Chandra series ${\rm{Irr}}_k(G|(L,X))$ consists of the irreducible $kG$-modules which occur in the head (or, equivalently, the socle) of $R_L^G(X)$. 
\item
The set ${\rm{Irr}}_k(G)$ is partitioned by the distinct Harish-Chandra series.
\end{enumerate}
\end{proposition}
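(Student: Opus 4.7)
The plan is to combine the two-sided adjunction between $R^G_L$ and ${}^*R^G_L$, transitivity of Harish-Chandra restriction, the Mackey decomposition, and the independence of $R^G_L(X)$ under conjugation by $N$, proving the three parts in the order (b), (a), (c). A recurring observation is that if $L$ is minimal in $\L_G$ with ${}^*R^G_L(Y) \neq 0$, then every composition factor of ${}^*R^G_L(Y)$ is cuspidal: for any $M \in \L_G$ properly contained in $L$, transitivity presents ${}^*R^L_M$ of such a composition factor as a subquotient of ${}^*R^G_M(Y) = 0$.

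For part (b), let $Y \in {\rm{Irr}}_k(G|(L,X))$, so that $X$ is a composition factor of ${}^*R^G_L(Y)$, all of whose composition factors are cuspidal by the observation above. The subtlest step is to show that the cuspidal irreducible $X$ actually lies in the socle (equivalently, the head) of ${}^*R^G_L(Y)$; I would establish this by analyzing ${}^*R^G_L(R^G_L(X))$ via the Mackey formula and extracting the trivial double-coset summand, which contributes a copy of $X$ and yields enough control on $\End_{kG}(R^G_L(X))$ to force every cuspidal composition factor of ${}^*R^G_L(Y)$ into the socle and head. The two-sided adjunction $\Hom_{kG}(R^G_L(X),Y) \cong \Hom_{kL}(X,{}^*R^G_L(Y)) \cong \Hom_{kG}(Y,R^G_L(X))$ then places $Y$ in both the head and socle of $R^G_L(X)$. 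Conversely, if $Y$ lies in the head of $R^G_L(X)$, adjunction yields $X \hookrightarrow {}^*R^G_L(Y)$, so $X$ is a composition factor, and the minimality of $L$ is forced by cuspidality of $X$ together with transitivity applied to any smaller $M \in \L_G$.

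For part (a), the ``if'' direction is immediate from the stated independence $R^G_L(X) \cong R^G_{L'}(X')$ combined with (b), since isomorphic modules have the same head. For ``only if'', fix $Y$ in the common series and apply the Mackey formula to ${}^*R^G_{L'}(R^G_L(X))$. The cuspidal irreducible $X'$ must appear among the composition factors of the Mackey sum; together with the minimality of $L'$, this rules out every summand $R^{L'}_{{}^n L \cap L'}({}^*R^{{}^n L}_{{}^n L \cap L'}({}^n X))$ with ${}^n L \cap L' \subsetneq L'$, forcing ${}^n L = L'$ for some double-coset representative $n \in N$, and on that summand ${}^n X$ is cuspidal and must be isomorphic to $X'$. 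The main obstacle will be the bookkeeping required to extract the conjugating element $n$ cleanly from this Mackey computation.

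Finally, part (c) combines the previous two: for any $Y \in {\rm{Irr}}_k(G)$, a minimal $L \in \L_G$ with ${}^*R^G_L(Y) \neq 0$ exists (with $L = G$ as a fallback), and any composition factor $X$ of ${}^*R^G_L(Y)$ is cuspidal by the recurring observation, so $Y \in {\rm{Irr}}_k(G|(L,X))$. Any two series containing $Y$ coincide by the conjugacy conclusion of (a), yielding the partition.
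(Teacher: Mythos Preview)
The paper does not actually prove this proposition: it is stated as background, with attribution to \cite[Sec.~4.2]{gj} and Hiss \cite{hiss3}, and no argument is given. So there is no in-paper proof to compare against, and your outline stands on its own.

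Your plan follows the standard route and is essentially correct, but two steps deserve sharpening. In part (b), the phrase about ``enough control on $\End_{kG}(R^G_L(X))$'' is not the mechanism that forces $X$ into the socle of ${}^*R^G_L(Y)$. The clean argument is: pick any irreducible $Z$ in $\soc({}^*R^G_L(Y))$ (cuspidal by your recurring observation), use adjunction to realize $Y$ as a quotient of $R^G_L(Z)$, and note via Mackey that ${}^*R^G_L(R^G_L(Z))\cong\bigoplus_{n:\,{}^nL=L}{}^nZ$ is semisimple (cuspidality of ${}^nZ$ kills all other summands). Exactness of ${}^*R^G_L$ then makes ${}^*R^G_L(Y)$ semisimple, so $X$ lies in its socle and head.

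In part (a), it is cuspidality of $X$ (not of $X'$, and not minimality of $L'$) that prunes the Mackey summands of ${}^*R^G_{L'}(R^G_L(X))$: only those $n$ with ${}^nL\subseteq L'$ survive. To conclude ${}^nL=L'$ you then need the symmetric application with the roles of $(L,X)$ and $(L',X')$ exchanged, together with a cardinality comparison $|L|\le|L'|\le|L|$. Once ${}^nL=L'$, the surviving summand is ${}^nX$ itself and the semisimple quotient ${}^*R^G_{L'}(Y)$ forces $X'\cong{}^nX$. Part (c) is fine as you have it.
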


\subsection{The Principal Series Representations}

Since every $kT$-module is cuspidal, there is a Harish-Chandra series of the form ${\rm{Irr}}_k(G | (T,X))$ for every irreducible $kT$-module $X$. The irreducible representations of $G$ belonging to a Harish-Chandra series of the form ${\rm{Irr}}_k(G | (T,X))$ are called {\textit{principal series representations}}. (Since $T$ is abelian, an irreducible $kT$-module $X$ must be one-dimensional.) \\

The principal Harish-Chandra series corresponding to the pair $(T,k)$ (where $k$ is viewed as the trivial irreducible $kT$-module) is called the unipotent principal series and is denoted by ${\rm{Irr}}_k(G|B)$. Since $R_T^G(k) \cong k|_B^G$, ${\rm{Irr}}_k(G|B)$ consists of the irreducible $kG$-modules which can be found in both the head and socle of the permutation module $k|_B^G$. If $V \in {\rm{Irr}}_k(G)$, then the multiplicity of $V$ in the head of $k|_B^G$ is $[k|_B^G:V] = \dim~\Hom_{kG}(k|_B^G,V)=\dim~\Hom_{kB}(k,V) = \dim~V^B$. Similarly, $[\soc(k|_B^G):V]=\dim~V^B$. Thus, an irreducible $kG$-module $V$ belongs to the unipotent principal series ${\rm{Irr}}_k(G|B)$ if and only if $V^B \neq 0$.

\subsection{Hecke Algebra Associated with a Harish Chandra Series}

Let $L \in \L_G$ and let $X$ be a cuspidal irreducible (left) $kL$-module. Let $R_L^G$ be the Harish-Chandra induction functor from the category of left $kL$-modules to the category of left $kG$-modules. We define 
$$\H(L,X) := \End_{kG}(R_L^G(X))^{\text{op}},$$
where $\End_{kG}(R_L^G(X))^{\text{op}}$ denotes the opposite of the endomorphism algebra $\End_{kG}(R_L^G(X))$. The algebra $\H(L,X)$ is called the Hecke algebra associated with the pair $(L,X)$. \\

The category of left $kG$-modules is related to the category of left $\H(L,X)$-modules via the Hom functor 
$$\F_{R_L^G(X)}: kG-\mod \to \H(L,X)-\mod, \text{ } Y \mapsto \Hom_{kG}(R_L^G(X),Y).$$

\begin{definition}\label{inertia}
Given a pair $(L,X)$, where $L \in \L_G$ is a Levi subgroup of $G$ and $X$ is a cuspidal irreducible $kL$-module, the inertia group of $X$ is $\mathscr{W}(L,X) := \{ n \in (N_G(L) \cap N)L ~|~ {^{n}{X}} \cong X \}/L$ \cite[4.2]{gj}.  
\end{definition}

The Hecke algebra $\H(L,X)$ has a basis parameterized by $\mathscr{W}(L,X)$ \cite[Thm. 4.2.12]{gj} (the case of $\char~k = 0$ was originally handled in \cite{hl1} and the case of $\char~k>0$ was originally handled in \cite{ghm}).

\section{A Bound on the Dimension of $\Ext^1$ Between Irreducible Modules in Distinct Principal Series}

Let $G$ be a finite group of Lie type defined in characteristic $p$, and let $k$ be an algebraically closed field of characteristic $r>0$, $r \neq p$. In this section, we will assume that the irreducible $kG$-module $Y$ belongs to the principal series ${\rm{Irr}}_k(G | (T,X))$ and that the irreducible $kG$-module $V$ belongs to the principal series ${\rm{Irr}}_k(G | (T,X'))$, with ${\rm{Irr}}_k(G | (T,X)) \neq {\rm{Irr}}_k(G | (T,X'))$. Our goal is to prove that $\Ext^1_{kG}(Y,V)=0$. Since $Y \in {\rm{Irr}}_k(G | (T,X))$, $Y$ is in the head of $R_T^G(X)$. Using a notation analogous to \cite{gt}, let $\L^0$ denote a maximal submodule of $R_T^G(X)$ with $R_T^G(X)/\L^0 \cong Y$. In the next result, we generalize the proof of \cite[Theorem 2.2]{gt} to show that for any irreducible $kG$-module $Z$ with $Z \centernot\in {\rm{Irr}}_k(G | (T,X))$, $\dim~\Ext^1_{kG}(Y,Z)$ is determined by the composition multiplicity of $Z$ in $\head(\L^0)$. \\

\begin{theorem}\label{headcount}
Let $Y$ be an irreducible $kG$-module in the principal series ${\rm{Irr}}_k(G | (T,X))$ (where $X$ is a one-dimensional $kT$-module). If $Z$ is an irreducible $kG$-module such that $Z \centernot\in {\rm{Irr}}_k(G | (T,X))$, then $\dim~\Ext^1_{kG}(Y,Z) = [\head(\L^0):Z]$ (where $ [\head(\L^0):Z]$ denotes the multiplicity of $Z$ as a composition factor of $\head(\L^0)$).
\end{theorem}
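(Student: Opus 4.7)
The plan is to apply $\Hom_{kG}(-, Z)$ to the short exact sequence
$$0 \longrightarrow \L^0 \longrightarrow R_T^G(X) \longrightarrow Y \longrightarrow 0$$
and read off $\Ext^1_{kG}(Y, Z)$ from the resulting long exact sequence. Two of the terms are handled immediately. Since $Y \in {\rm{Irr}}_k(G|(T,X))$ while $Z \not\in {\rm{Irr}}_k(G|(T,X))$, the irreducibles $Y$ and $Z$ are non-isomorphic, so $\Hom_{kG}(Y,Z) = 0$. Since $k$ is algebraically closed and $Z$ is simple, Schur's lemma gives $\dim \Hom_{kG}(\L^0, Z) = [\head(\L^0) : Z]$. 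The theorem therefore reduces to the two vanishings $\Hom_{kG}(R_T^G(X), Z) = 0$ and $\Ext^1_{kG}(R_T^G(X), Z) = 0$.

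To handle both I would pass to the torus using adjointness. Because $R_T^G$ and ${}^*R_T^G$ are both exact and mutually two-sided adjoint, $R_T^G$ sends projective $kT$-modules to projective $kG$-modules; applying $R_T^G$ to a projective resolution $P_\bullet \to X$ therefore produces a projective resolution of $R_T^G(X)$ over $kG$, and the adjunction yields
$$\Ext^i_{kG}(R_T^G(X), Z) \;\cong\; \Ext^i_{kT}(X,\, {}^*R_T^G(Z)) \;=\; \Ext^i_{kT}(X, Z^U)$$
for every $i \ge 0$. Since $T$ is the minimal element of $\L_G$, the minimality clause in the definition of a Harish-Chandra series is automatic for $T$; consequently, if $X$ were a composition factor of $Z^U$ we would have $Z \in {\rm{Irr}}_k(G|(T,X))$, contradicting the hypothesis. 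Hence $X$ is not a composition factor of $Z^U$, which immediately gives $\Hom_{kT}(X, Z^U) = 0$ and hence the $i = 0$ vanishing.

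For the $\Ext^1$-vanishing I would exploit the block decomposition of $kT$. Writing $T = T_r \times T_{r'}$ one has $kT \cong kT_r \otimes_k kT_{r'}$; every simple $kT$-module is one-dimensional, trivial on $T_r$, and determined by its character on $T_{r'}$, so distinct simple $kT$-modules sit in distinct blocks of $kT$. A short check confirms $\Ext^1_{kT}(X, X') = 0$ whenever $X \not\cong X'$: any 2-dimensional extension diagonalizes under the semisimple action of $kT_{r'}$, and the resulting eigen-lines are stable under the commuting $T_r$-action, forcing the extension to split. D\'evissage along a composition series of $Z^U$ then yields $\Ext^1_{kT}(X, Z^U) = 0$. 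With both vanishings in place the long exact sequence collapses to an isomorphism $\Hom_{kG}(\L^0, Z) \xrightarrow{\sim} \Ext^1_{kG}(Y, Z)$, which is the claimed equality. I expect the main technical point to be the reduction $\Ext^i_{kG}(R_T^G(X), Z) \cong \Ext^i_{kT}(X, Z^U)$ via exactness plus two-sided adjointness, together with the block-theoretic $\Ext^1$-vanishing over the torus; the remainder of the argument is essentially formal bookkeeping.
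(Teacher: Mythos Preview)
Your proof is correct, and it takes a genuinely different route from the paper's.

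Both arguments start identically: apply $\Hom_{kG}(-,Z)$ to $0 \to \L^0 \to R_T^G(X) \to Y \to 0$ and reduce to showing $\Hom_{kG}(R_T^G(X),Z)=0$ and $\Ext^1_{kG}(R_T^G(X),Z)=0$. From there the approaches diverge. The paper reduces to the Borel subgroup via ordinary Eckmann--Shapiro, obtaining $\Ext^1_{kB}(\tilde{X},Z)$, and then kills this group using the five-term inflation--restriction sequence for the largest normal $r'$-subgroup $A \trianglelefteq B$: one shows $\Hom_{kA}(\tilde{X},Z)=0$ by a fixed-point argument for the $r$-group $B/A$, and combines this with $H^1(A,-)=0$ to conclude. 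This is a direct generalization of Guralnick and Tiep's original argument. You instead reduce all the way down to the torus $T$ by a derived adjunction: since $R_T^G$ and ${}^*R_T^G$ are exact biadjoints, $R_T^G$ carries projective resolutions to projective resolutions, giving $\Ext^i_{kG}(R_T^G(X),Z)\cong\Ext^i_{kT}(X,Z^U)$ for all $i$. The vanishing then comes from the block structure of $kT$: distinct one-dimensional simples lie in distinct blocks (determined by their $T_{r'}$-character), so there are no extensions between them, and d\'evissage finishes the job since $X$ is not a composition factor of $Z^U$.

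Your argument is somewhat cleaner and more intrinsic to the Harish-Chandra formalism, and it makes transparent exactly which feature of $T$ is being used (abelian with a single simple per block). The paper's argument has the virtue of working at the level of $B$ and tracking the Guralnick--Tiep template closely, which is natural given that the theorem is presented as a generalization of their result; it also avoids invoking the two-sided adjointness and instead uses only ordinary induction/restriction and inflation--restriction.
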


\begin{proof}
By definition, $\L^0$ fits into a short exact sequence of $kG$-modules $0 \to \L^0 \to R_T^G(X) \to Y \to 0$. Since $Z \centernot\in {\rm{Irr}}_k(G | (T,X))$, $\Hom_{kG}(Y,Z)=0$ and $\Hom_{kG}(R_T^G(X),Z)=0$. Thus, the long exact sequence in $\Ext$ yields
$$0 \to  \Hom_{kG}(\L^0,Z) \to \Ext^1_{kG}(Y,Z) \to \Ext^1_{kG}(R_T^G(X),Z) \to \cdots.$$
To prove the statement of the theorem, it is enough to show that $\Ext^1_{kG}(R_T^G(X),Z)=0$. For, if $\Ext^1_{kG}(R_T^G(X),Z)=0$, then  $\Ext^1_{kG}(Y,Z) \cong  \Hom_{kG}(\L^0,Z)$ by the exactness of the sequence above, which means that $\dim~\Ext^1_{kG}(Y,Z)=\dim~\Hom_{kG}(\L^0,Z)=[\head(\L^0):Z]$. \\

We will now prove that $\Ext^1_{kG}(R_T^G(X),Z)=0$. By definition of $R_T^G(X)$ and by the Eckmann-Shapiro Lemma, we have $\Ext^1_{kG}(R_T^G(X),Z) = \Ext^1_{kG}(\tilde{X}|_B^G,Z) \cong \Ext^1_{kB}(\tilde{X},Z)$ (where $\tilde{X}$ is the inflation of $X$ from $T$ to $B$ via the surjective homomorphism $B \twoheadrightarrow T$ with kernel $U$). Thus, it suffices to prove that $\Ext^1_{kB}(\tilde{X},Z)=0$. \\

Using the notation of \cite{gt}, let $A$ be the biggest normal subgroup of $B$ of order prime to $r=\char(k)$. The quotient group $B/A$ is an $r$-group. We claim that $\Hom_{kA}(\tilde{X},Z)=0$. Since $Z \not\in {\rm{Irr}}_k(G|(T,X))$, $0 = \Hom_{kG}(R_T^G(X), Z) = \Hom_{kG}( \tilde{X}|_B^G, Z) \cong \Hom_{kB}(\tilde{X},Z)$ (where the isomorphism $\Hom_{kG}( \tilde{X}|_B^G, Z) \cong \Hom_{kB}(\tilde{X},Z)$ follows by Frobenius reciprocity). Now, the group $B$ acts on the $k$-vector space $\Hom_k(\tilde{X},Z)$ (given an element $b \in B$ and a $k$-vector space homomorphism $\phi \in \Hom_k(\tilde{X},Z)$, $b.\phi \in \Hom_k(\tilde{X},Z)$ is defined by $(b.\phi)(x)=b\phi(b^{-1}x)$ for any $x \in \tilde{X}$). We have an isomorphism $(\Hom_{k}(\tilde{X},Z))^B \cong \Hom_{kB}(\tilde{X},Z)$, from which it follows that $(\Hom_{k}(\tilde{X},Z))^B=0$. Since $A$ is a normal subgroup of $B$, we also have $0=(\Hom_{k}(\tilde{X},Z))^B = [ (\Hom_{k}(\tilde{X},Z))^A ]^{B/A}$. But, $B/A$ is an $r$-group and $\char(k)=r$, so this is possible only if $(\Hom_{k}(\tilde{X},Z))^A=0$ (otherwise, the $r$-group $B/A$ would have a non-zero fixed point on the $k$-vector space $(\Hom_{k}(\tilde{X},Z))^A$). Therefore, $\Hom_{kA}(\tilde{X},Z) \cong (\Hom_{k}(\tilde{X},Z))^A=0$. \\

Applying the five-term inflation-restriction exact sequence on cohomology to the $kB$-module $\tilde{X}^* \otimes_k Z$ (where $\tilde{X}^*$ is the $k$-linear dual of $\tilde{X}$), we have: \\

$0 \to H^1(B/A, (\tilde{X}^* \otimes_k Z)^A) \to H^1(B,\tilde{X}^* \otimes_k Z) \to H^1(A,\tilde{X}^* \otimes_k Z)^{B/A} \to H^2(B/A,(\tilde{X}^* \otimes_k Z)^A) \to H^2(B,\tilde{X}^* \otimes_k Z).$ \\

By assumption, $r \centernot\mid |A|$; therefore, $kA$ is semisimple by Maschke's Theorem, which means that $H^1(A,\tilde{X}^* \otimes_k Z)^{B/A}=0$. Since $(\tilde{X}^* \otimes_k Z)^A \cong \Hom_{kA}(\tilde{X},Z)=0$, we have $H^1(B/A, (\tilde{X}^* \otimes_k Z)^A)=0$. We conclude that $H^1(B,\tilde{X}^* \otimes_k Z)=0$ by exactness of the sequence above. But, $H^1(B,\tilde{X}^* \otimes_k Z) \cong \Ext^1_{kB}(k,\tilde{X}^* \otimes_k Z) \cong \Ext^1_{kB}(\tilde{X},Z)$, so it follows that $\Ext^1_{kB}(\tilde{X},Z)=0$, as needed. 
\end{proof}

\begin{proposition}\label{nofactors}
Let $Y$ be an irreducible $kG$-module in the principal series ${\rm{Irr}}_k(G | (T,X))$ and let $V$ be an irreducible $kG$-module in the principal series ${\rm{Irr}}_k(G | (T,X'))$, with \\
${\rm{Irr}}_k(G | (T,X)) \neq {\rm{Irr}}_k(G | (T,X'))$. Then, $[R_T^G(X):V]=0$.
\end{proposition}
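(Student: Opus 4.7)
The plan is to argue by contradiction, showing that if $V$ were a composition factor of $R_T^G(X)$, then the Harish-Chandra source $X'$ of $V$ would have to be $W$-conjugate to $X$, which by Proposition~3.1(a) would force $\mathrm{Irr}_k(G|(T,X)) = \mathrm{Irr}_k(G|(T,X'))$ and contradict our hypothesis. So the essential task is to control the composition factors of the Harish-Chandra restriction of $R_T^G(X)$ back down to $T$, and to relate those to $X'$.

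The key computational step is a Mackey decomposition: since $T$ is abelian and normalized by $N$, we have ${^n T} = T$ and ${^n T} \cap T = T$ for every $n \in N$, so the Mackey formula (with $L = M = T$ and $P = Q = B$) collapses to
\[
{^*R}_T^G(R_T^G(X)) \;\cong\; \bigoplus_{w \in W} {^w X},
\]
where $D(B,B) = W$ via the Bruhat decomposition. Because $T$ is abelian, each one-dimensional module ${^w X}$ is irreducible, so the composition factors of ${^*R}_T^G(R_T^G(X))$ are precisely the $W$-conjugates of $X$.

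Now suppose, for contradiction, that $[R_T^G(X) : V] > 0$. Then $V$ is a subquotient of $R_T^G(X)$, and by the exactness of Harish-Chandra restriction, ${^*R}_T^G(V)$ is a subquotient of ${^*R}_T^G(R_T^G(X))$. By the definition of the Harish-Chandra series containing $V$, the cuspidal source $X'$ appears as a composition factor of ${^*R}_T^G(V)$, hence as a composition factor of ${^*R}_T^G(R_T^G(X))$. Combining this with the Mackey calculation above forces $X' \cong {^w X}$ for some $w \in W$, and Proposition~3.1(a) then yields $\mathrm{Irr}_k(G|(T,X)) = \mathrm{Irr}_k(G|(T,X'))$, contradicting the hypothesis.

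I do not anticipate a serious obstacle here: the Mackey formula is given, the exactness of Harish-Chandra restriction is recorded in Section~3.2, and Proposition~3.1(a) provides the $W$-conjugacy criterion. The only point requiring mild care is verifying that the Mackey terms collapse so cleanly, which uses that $T \trianglelefteq N$ and that ${^*R}_T^T$ is the identity functor; and noting that one-dimensionality of each ${^w X}$ is what lets us read off the composition factors of the direct sum directly.
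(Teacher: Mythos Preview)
Your proposal is correct and follows essentially the same strategy as the paper: compute ${^*R}_T^G(R_T^G(X))$ via Mackey, then argue that a putative occurrence of $V$ forces $X'$ to be a $W$-conjugate of $X$, contradicting Proposition~3.1(a). Your version is in fact slightly more streamlined than the paper's: the paper first applies Mackey to $R_T^G(X')$ to show that every composition factor of ${^*R}_T^G(V)$ has the form ${^{w'}X'}$, and only then compares with the Mackey decomposition of $R_T^G(X)$; you bypass this by directly invoking the definition of ${\rm Irr}_k(G|(T,X'))$ to conclude that $X'$ itself is a composition factor of ${^*R}_T^G(V)$, which is all that is needed.
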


\begin{proof}
Since $V \in {\rm{Irr}}_k(G | (T,X'))$, $V$ is a composition factor of the head and socle of $R_T^G(X')$ and ${^{*}{R}}^G_T(V) \neq 0$. Now, $W$ gives a full set of $(B,B)$-double coset representatives in $G$, so the Mackey decomposition yields
$$^{*}{R}^G_T(R_T^G(X')) \cong \underset{w \in W}{\bigoplus} {R_{^{w}{T} \cap T}^T} ^{*}{R}^{^{w}{T}}_{{^{w}{T}} \cap T} ({^{w}{X'}}).$$
But, since $W \leq N_G(T)/T$, $^{w}{T}=wTw^{-1}=T$ for any $w \in W$. Therefore, the functors $R_{^{w}{T} \cap T}^T$ and $^*{R}^{^{w}{T}}_{^{w}{T} \cap T}$ are equal to the identity functor on $kT-\text{mod}$ for all $w \in W$, and it follows that 
$$^{*}{R}^G_T(R_T^G(X')) \cong \underset{w \in W}{\bigoplus} {^{w}{X'}}.$$
Since $^{*}{R}^G_T$ is exact, $^{*}{R}^G_T(V)$ is a non-zero $kT$-submodule of the completely reducible $kT$-module $^{*}{R}^G_T(R_T^G(X'))$. Thus, there must be some subset $\Omega \subseteq W$ such that
\begin{equation}\label{isomega} 
^{*}{R}^G_T(V) \cong \underset{w \in \Omega}{\bigoplus} {^{w}{X'}}.
\end{equation}
Suppose, for contradiction, that $[R_T^G(X):V] \neq 0$. By the Mackey decomposition, 
$$^{*}{R}^G_T(R_T^G(X)) \cong \underset{w \in W}{\bigoplus} {^{w}{X}}.$$ 
Thus, if $V$ is a composition factor of $R_T^G(X)$, then ${^{*}{R}}^G_T(V)$ is a non-zero submodule of $^{*}{R}^G_T(R_T^G(X))$, which means that $^{w}{X} \subseteq {^{*}{R}}^G_T(V)$ for some $w \in W$. Then, (\ref{isomega}) yields $^{w}{X} \cong {^{w'}{X'}}$ for some $w' \in \Omega$, so that $X' \cong {^{(w')^{-1}w}{X}}$. But, if $X'$ is a twist of $X$ by an element of $W$, then ${\rm{Irr}}_k(G | (T,X))={\rm{Irr}}_k(G | (T,X'))$, contradicting the assumption in the statement of the proposition. 
\end{proof}

\begin{corollary}\label{edit1}
If $Y \in {\rm{Irr}}_k(G | (T,X))$ and $V \in {\rm{Irr}}_k(G | (T,X'))$, with ${\rm{Irr}}_k(G | (T,X)) \neq {\rm{Irr}}_k(G | (T,X'))$, then, $\Ext^1_{kG}(Y,V)=0$.
\end{corollary}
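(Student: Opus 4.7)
The plan is to simply chain together Theorem \ref{headcount} and Proposition \ref{nofactors}: Theorem \ref{headcount} reduces the computation of $\dim~\Ext^1_{kG}(Y,V)$ to counting the multiplicity of $V$ in $\head(\L^0)$, and Proposition \ref{nofactors} shows that $V$ does not even appear as a composition factor of $R_T^G(X)$, hence certainly not in $\head(\L^0)$.

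More concretely, first I would verify that the hypothesis of Theorem \ref{headcount} applies: since the Harish-Chandra series ${\rm{Irr}}_k(G | (T,X))$ and ${\rm{Irr}}_k(G | (T,X'))$ are distinct and (by part (c) of the properties of Harish-Chandra series) partition ${\rm{Irr}}_k(G)$, the module $V$ lies in ${\rm{Irr}}_k(G | (T,X'))$ and therefore not in ${\rm{Irr}}_k(G | (T,X))$. Thus Theorem \ref{headcount} gives
\[
\dim~\Ext^1_{kG}(Y,V) = [\head(\L^0) : V].
\]

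Next I would observe that $\L^0$ is a $kG$-submodule of $R_T^G(X)$, so every composition factor of $\head(\L^0)$ is in particular a composition factor of $R_T^G(X)$. Hence
\[
[\head(\L^0) : V] \leq [R_T^G(X) : V].
\]
By Proposition \ref{nofactors}, the right-hand side is zero, which forces $[\head(\L^0) : V] = 0$ and therefore $\Ext^1_{kG}(Y,V) = 0$.

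There is no real obstacle here — the corollary is a one-line consequence of the two results immediately preceding it. The only small point worth stating carefully is that $\head(\L^0)$ has its composition factors among those of $R_T^G(X)$, so the vanishing of $[R_T^G(X):V]$ propagates to the vanishing of $[\head(\L^0):V]$.
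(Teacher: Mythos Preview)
Your proposal is correct and follows essentially the same argument as the paper: apply Theorem \ref{headcount} to get $\dim~\Ext^1_{kG}(Y,V)=[\head(\L^0):V]$, then bound this by $[R_T^G(X):V]$, which vanishes by Proposition \ref{nofactors}. The paper's proof is the one-line chain $\dim~\Ext^1_{kG}(Y,V)=[\head(\L^0):V] \leq [\L^0:V] \leq [R_T^G(X):V]=0$, and your only addition is the explicit remark that distinct Harish-Chandra series are disjoint, which justifies invoking Theorem \ref{headcount}.
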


\begin{proof}
As above, let $\L^0$ be a submodule of $R_T^G(X)$ with $R_T^G(X)/\L^0 \cong Y$. Then, by Theorem \ref{headcount} and Proposition \ref{nofactors}, $\dim~\Ext^1_{kG}(Y,V)=[\head(\L^0):V] \leq [\L^0:V] \leq [R_T^G(X):V]=0$. \\
\end{proof}

\section{A Bound on the Dimension of $\Ext^1_{kG}(Y,V)$ when $Y$ and $V$ belong to the same Principal Series}

Let $G$ be a finite group of Lie type defined in characteristic $p$, and let $k$ be an algebraically closed field of characteristic $r>0$, $r \neq p$. In this section, we will assume that $Y$ and $V$ belong to the same principal series ${\rm{Irr}}_k(G | (T,X))$ (where $X$ is any one-dimensional $kT$-module). Let $e$ be the $r$-rank of the maximal torus $T$ (that is, $e$ is the maximal dimension of an elementary abelian $r$-subgroup of $T$ as an $\mathbb{F}_r$-vector space). Our goal is to prove that $\dim~ \Ext^1_{kG}(Y,V) \leq [W:W(T,X)]~|W|+{\rm{min}}(\dim~Y,\dim~V)e$, where $W(T,X)$ is the inertia group of $X$ (see Definition \ref{inertia}). In Lemma \ref{countagain}, we provide an upper bound on the number of times an irreducible representation in the principal series ${\rm{Irr}}_k(G | (T,X))$ can appear as a composition factor of the induced module $R_T^G(X)$. (The proof of Lemma \ref{countagain} uses strategies of \cite[Prop. 3.1]{gj}.) \\

\begin{lemma}\label{countagain}
If $Z$ is an irreducible $kG$-module in the principal series ${\rm{Irr}}_k(G | (T,X))$, then $[R_T^G(X):Z] \leq |W|$.
\end{lemma}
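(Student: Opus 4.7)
The approach is to combine the Mackey decomposition with the exactness of the Harish-Chandra restriction functor $^{*}{R}^G_T$. Since $W$ is a full set of $(B,B)$-double coset representatives in $G$ and $^{w}{T}=T$ for every $w \in W$, Mackey gives (exactly as was already computed in the proof of Proposition \ref{nofactors})
$$^{*}{R}^G_T(R_T^G(X)) \;\cong\; \underset{w \in W}{\bigoplus} {^{w}{X}}.$$
Because $T$ is abelian, $X$ is one-dimensional, and hence $\dim~{^{*}{R}^G_T(R_T^G(X))} = |W|$.

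Next, fix any composition series of the $kG$-module $R_T^G(X)$. Since $^{*}{R}^G_T$ is exact, applying it to this series produces a filtration of ${^{*}{R}^G_T(R_T^G(X))}$ whose successive quotients are the $kT$-modules $^{*}{R}^G_T(Z')$ for the composition factors $Z'$ of $R_T^G(X)$. Collecting dimensions and grouping by isomorphism class,
$$|W| \;=\; \dim~{^{*}{R}^G_T(R_T^G(X))} \;=\; \underset{Z' \in {\rm{Irr}}_k(G)}{\sum} [R_T^G(X):Z']\cdot \dim~{^{*}{R}^G_T(Z')}.$$
Discarding every summand on the right except the one indexed by $Z'=Z$ yields
$$[R_T^G(X):Z] \cdot \dim~{^{*}{R}^G_T(Z)} \;\leq\; |W|.$$

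To conclude, invoke the hypothesis $Z \in {\rm{Irr}}_k(G | (T,X))$: by the very definition of a Harish-Chandra series, $T$ is minimal among the Levi subgroups with $^{*}{R}^G_T(Z) \neq 0$, so in particular $\dim~{^{*}{R}^G_T(Z)} \geq 1$. Dividing through then gives the claimed inequality $[R_T^G(X):Z] \leq |W|$. No serious obstacle is anticipated: beyond the Mackey simplification already used in Proposition \ref{nofactors}, the only ingredients are exactness of $^{*}{R}^G_T$ and the non-vanishing of $^{*}{R}^G_T(Z)$ for $Z$ in the principal series, both of which were recorded earlier. The one subtlety worth flagging is that the argument really uses the minimality clause in the definition of a Harish-Chandra series (to guarantee $^{*}{R}^G_T(Z)\neq 0$), not merely that $Z$ is a head constituent of $R_T^G(X)$.
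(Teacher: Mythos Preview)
Your proposal is correct and follows essentially the same argument as the paper: Mackey decomposition to compute $\dim~{^{*}{R}^G_T(R_T^G(X))}=|W|$, exactness of $^{*}{R}^G_T$ to obtain the dimension formula $\sum_{Z'}[R_T^G(X):Z']\cdot\dim~{^{*}{R}^G_T(Z')}$, and then the non-vanishing of $^{*}{R}^G_T(Z)$ from $Z\in{\rm Irr}_k(G|(T,X))$ to conclude. Your phrasing in terms of a filtration is in fact slightly more careful than the paper's, which writes the decomposition as a direct sum (justifiable here since the Mackey side is completely reducible, but not an immediate consequence of exactness alone).
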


\begin{proof}
By the Mackey decomposition,
$$^{*}{R}^G_T(R_T^G(X)) \cong \underset{w \in W}{\bigoplus} {R_{^{w}{T} \cap T}^T} ^{*}{R}^{^{w}{T}}_{{^{w}{T}} \cap T} ({^{w}{X}}) \cong \underset{w \in W}{\oplus} {^{w}{X}}.$$
Since ${^{w}{X}}$ is an irreducible one-dimensional $kT$-module for every $w \in W$, the $kT$-module $^{*}{R}^G_T(R_T^G(X))$ is completely reducible and $\dim~^{*}{R}^G_T(R_T^G(X))=|W|$. On the other hand, since $^{*}{R}^G_T$ is exact, $^{*}{R}^G_T(R_T^G(X))$ has the following direct sum decomposition as a $kT$-module:
$$^{*}{R}^G_T(R_T^G(X)) \cong \underset{Z' \in {\rm{Irr}}_k(G)}{\bigoplus} ^{*}{R}^G_T(Z')^{\oplus [R_T^G(X):Z']}.$$
Therefore,
\begin{equation}\label{newyear}
\dim~^{*}{R}^G_T(R_T^G(X))=\underset{Z' \in {\rm{Irr}}_k(G)}{\sum} [R_T^G(X):Z']~ \dim~^{*}{R}^G_T(Z').
\end{equation}
Since $Z \in {\rm{Irr}}_k(G | (T,X))$, we have $^{*}{R}^G_T(Z) \neq 0$. Thus, $[R_T^G(X):Z]$ appears with non-zero coefficient in (\ref{newyear}), and it follows that $[R_T^G(X):Z] \leq \dim~^{*}{R}^G_T(R_T^G(X))=|W|$. 
\end{proof}

To establish the desired bound on $\Ext^1_{kG}(Y,V)$, we will work with the unique Sylow $r$-subgroup $T_r$ of the abelian group $T$. Hence, we will have to break our proof into two cases, depending on whether or not $|B|$ is divisible by the characteristic $r$ of $k$. 

\subsection{Case I: $r \nmid |B|$}

Assume that the characteristic $r$ of $k$ does not divide $|B|$. \\

\begin{theorem}
If $Y$ and $V$ are irreducible $kG$-modules in the principal series ${\rm{Irr}}_k(G|(T,X))$, then, $\dim~ \Ext^1_{kG}(Y,V) \leq |W|$.
\end{theorem}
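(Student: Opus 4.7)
The plan is to combine the short exact sequence
$$0 \to \L^0 \to R_T^G(X) \to Y \to 0$$
(with $\L^0$ a maximal submodule of $R_T^G(X)$ having quotient $Y$) with the Hom--Ext long exact sequence in the second variable, together with Lemma \ref{countagain}. Applying $\Hom_{kG}(-,V)$ yields
$$\cdots \to \Hom_{kG}(R_T^G(X),V) \to \Hom_{kG}(\L^0,V) \to \Ext^1_{kG}(Y,V) \to \Ext^1_{kG}(R_T^G(X),V) \to \cdots,$$
so the central step is to force $\Ext^1_{kG}(R_T^G(X),V)$ to vanish; if it does, then $\Ext^1_{kG}(Y,V)$ is a quotient of $\Hom_{kG}(\L^0,V)$, and the bound will follow by counting composition factors.

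The vanishing of $\Ext^1_{kG}(R_T^G(X),V)$ is where the hypothesis $r\nmid |B|$ enters, and this is the only real content of the argument. As in the proof of Theorem \ref{headcount}, Eckmann--Shapiro (applied to the inflation $\tilde X$ of $X$ from $T$ to $B$) gives
$$\Ext^1_{kG}(R_T^G(X),V) \;=\; \Ext^1_{kG}(\tilde X|_B^G,V) \;\cong\; \Ext^1_{kB}(\tilde X,V).$$
Since $r\nmid |B|$, Maschke's theorem makes $kB$ semisimple, and this last Ext group vanishes immediately. (This short-circuits the more delicate five-term spectral-sequence argument of Theorem \ref{headcount}, which will instead be needed in Case II when $r\mid |B|$.)

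With the connecting map surjective, we obtain
$$\dim \Ext^1_{kG}(Y,V) \;\leq\; \dim \Hom_{kG}(\L^0,V).$$
Because $V$ is irreducible and $k$ is algebraically closed, Schur's lemma gives $\End_{kG}(V)=k$, so $\dim \Hom_{kG}(\L^0,V) = [\head(\L^0):V]$. Since $\head(\L^0)$ is a quotient of $\L^0 \subseteq R_T^G(X)$,
$$[\head(\L^0):V] \;\leq\; [\L^0:V] \;\leq\; [R_T^G(X):V].$$
Finally, $V \in {\rm{Irr}}_k(G|(T,X))$ by hypothesis, so Lemma \ref{countagain} applies and gives $[R_T^G(X):V]\leq |W|$, closing out the inequality.

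The argument is therefore largely formal once the Eckmann--Shapiro plus Maschke reduction is in place; the only substantive ingredient beyond the long exact sequence is the multiplicity bound from Lemma \ref{countagain}, and the ``hard step'' is really just recognizing that the whole Case I is isolated by the semisimplicity of $kB$. (In Case II, by contrast, one will need to replace Maschke by an analysis of the Sylow $r$-subgroup $T_r$, which is where the extra $\min(\dim Y,\dim V)\,e$ term in the general bound arises.)
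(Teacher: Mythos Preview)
Your proof is correct. It differs from the paper's argument only by a duality: the paper embeds $V$ into the socle of $R_T^G(X)$, applies $\Hom_{kG}(Y,-)$, and uses that $R_T^G(X)$ is an \emph{injective} $kG$-module (since $\tilde X$ is injective over the semisimple algebra $kB$ and induction preserves injectives) to kill $\Ext^1_{kG}(Y,R_T^G(X))$; it then bounds by $[R_T^G(X):Y]$. You instead take $Y$ as a quotient of $R_T^G(X)$, apply $\Hom_{kG}(-,V)$, and kill $\Ext^1_{kG}(R_T^G(X),V)$ via Eckmann--Shapiro plus Maschke, bounding by $[R_T^G(X):V]$. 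Both routes rest on the same fact---semisimplicity of $kB$ forces the relevant $\Ext^1$ through $R_T^G(X)$ to vanish---and both finish with Lemma~\ref{countagain}; yours has the mild advantage of reusing the exact sequence and Eckmann--Shapiro step already set up in Theorem~\ref{headcount}, while the paper's version makes the injectivity of $R_T^G(X)$ explicit.
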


\begin{proof}
Since $V \in {\rm{Irr}}_k(G|(T,X))$, $V$ is contained in the socle of $R_T^G(X)$. Therefore, we have a short exact sequence of $kG$-modules $$0 \to V \to R_T^G(X) \to M \to 0$$
(where $M \cong R_T^G(X)/V$). By assumption, $r \centernot\mid |B|$, which means that $kB$ is semisimple and $\tilde{X}$ is an injective $kB$-module. Since induction from $B$ to $G$ is exact, $R_T^G(X)=\tilde{X}|_B^G$ is an injective $kG$-module and it follows that $\Ext^1_{kG}(Y,R_T^G(X))=0$. So, the short exact sequence above induces the exact sequence
$$0 \to \Hom_{kG}(Y,V) \to \Hom_{kG}(Y,R_T^G(X)) \to \Hom_{kG}(Y,M) \to \Ext^1_{kG}(Y,V) \to 0.$$
Therefore, $\dim~\Ext^1_{kG}(Y,V) \leq \dim~ \Hom_{kG}(Y,M) =[\soc(M):Y] \leq [M:Y] \leq [R_T^G(X):Y] \leq |W|$ (where the last inequality follows by Lemma \ref{countagain}). 
\end{proof}

\subsection{Case II: $r \mid |B|$} 

Assume that $|B|$ is divisible by the characteristic $r$ of $k$. Let $T_r$ be the (unique) normal Sylow $r$-subgroup of the abelian group $T$. Since $r \mid |B|=|U||T|$ and $U$ is a $p$-group, $r \mid |T|$, so $T_r$ is a non-trivial $r$-subgroup of $T$. We will reduce the problem of bounding $\Ext^1_{kG}(Y,V)$ to a problem of bounding a cohomology group of $T_r$. To achieve this reduction, we will work with the permutation module $k|_{T_r}^T$. Since $T_r$ is a normal subgroup of $T$, $T/T_r$ is a group and $k|_{T_r}^T \cong k[T/T_r]$. The group algebra $k[T/T_r]$ is semisimple by Maschke's Theorem, which means that $k|_{T_r}^T$ is a completely reducible $kT$-module. \\

Let $Z$ be any irreducible (and, neccessarily one-dimensional) $kT$-module. Since the only irreducible module for an $r$-group in characteristic $r$ is the trivial module $k$, we must have $Z \downarrow_{T_r}=k$. So, by Frobenius reciprocity, $\Hom_{kT}(k|_{T_r}^T,Z) \cong \Hom_{kT_r}(k,Z \downarrow_{T_r})=\Hom_{kT_r}(k,k) \cong k$, which means that $[k|_{T_r}^T:Z]=1$. Thus, the completely reducible $kT$-module $k|_{T_r}^T$ contains every irreducible $kT$-module as a direct summand exactly once. \\

\begin{lemma}\label{rgroup}
Let $A$ be an abelian $r$-group or rank $e$. Then, $\dim~H^n(A,k)=\displaystyle{n+e-1 \choose n}$ for all $n \geq 0$.
\end{lemma}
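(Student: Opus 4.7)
The plan is to decompose $A$ into cyclic factors and apply the Künneth formula. By the structure theorem for finite abelian groups, $A \cong C_{r^{a_1}} \times \cdots \times C_{r^{a_m}}$ for some integers $a_i \geq 1$. A moment's inspection shows that the unique largest elementary abelian $r$-subgroup of $A$ is $\prod_{i=1}^m \langle g_i^{r^{a_i - 1}} \rangle \cong (\mathbb{Z}/r)^m$, so compatibility with the paper's definition of $r$-rank forces $m = e$. This sets up the problem as computing $H^n$ of an $e$-fold direct product of cyclic $r$-groups.

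The first main step is to compute $H^n(C, k)$ for a cyclic $r$-group $C = \langle g \rangle$ of order $r^a$. Here I would use the standard periodic free resolution of the trivial $kC$-module,
\[
\cdots \xrightarrow{N} kC \xrightarrow{g-1} kC \xrightarrow{N} kC \xrightarrow{g-1} kC \to k \to 0,
\]
where $N = 1 + g + g^2 + \cdots + g^{r^a - 1}$. Applying $\Hom_{kC}(-, k)$ produces a complex $k \to k \to k \to \cdots$ in which every differential vanishes: the map induced by $g-1$ is zero since $g$ acts trivially on $k$, and the map induced by $N$ is multiplication by $r^a = 0$ in $k$. Hence $\dim_k H^n(C, k) = 1$ for every $n \geq 0$.

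The second step is the Künneth formula over the field $k$: for finite groups $G_1, G_2$,
\[
H^*(G_1 \times G_2, k) \;\cong\; H^*(G_1, k) \otimes_k H^*(G_2, k)
\]
as graded $k$-vector spaces (no $\Tor$ correction, since we are over a field). Iterating this over the $e$ cyclic factors of $A$ and combining with Step 1 gives
\[
\dim_k H^n(A, k) \;=\; \#\{(n_1, \ldots, n_e) \in \mathbb{Z}_{\geq 0}^e : n_1 + \cdots + n_e = n\} \;=\; \binom{n + e - 1}{n}
\]
by the standard stars-and-bars count.

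There is no serious obstacle here; the only thing needing a brief check is that the paper's notion of $r$-rank (maximal $\mathbb{F}_r$-dimension of an elementary abelian $r$-subgroup) matches the number of invariant-factor summands of $A$, which was handled at the outset. Everything else is the standard cyclic-group cohomology computation plus a Künneth bookkeeping.
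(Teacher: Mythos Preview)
Your proposal is correct and follows essentially the same approach as the paper: decompose $A$ into cyclic $r$-group factors, compute $H^n$ of each cyclic factor (one-dimensional in every degree), and assemble via the K\"unneth formula. The only cosmetic difference is that the paper organizes the K\"unneth step as an induction on $e$ together with the hockey-stick identity $\sum_{i=0}^n \binom{i+e-2}{i} = \binom{n+e-1}{n}$, whereas you apply K\"unneth to all $e$ factors at once and invoke stars-and-bars directly; your version is slightly more explicit (periodic resolution, rank check) but the underlying argument is the same.
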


\begin{proof}
We proceed by induction on $e$. If $e=1$, then $A=\Z/m\Z$ for some $m=r^d$ ($d \in \Z^+$), and $H^n(A,k) \cong k$ for all $n \geq 0$ since $A$ is a cyclic $r$-group. In particular, $\dim~H^n(A,k) = 1 = \displaystyle{n +1-1 \choose n}$, and the statement of the lemma holds. Suppose now that $e>1$ and $\dim~H^n(A',k)=\displaystyle{n+e-2 \choose n}$ for all abelian $r$-groups $A'$ of rank $e-1$ and all $n \geq 0$. Let $m_1, \ldots, m_e$ be positive integers such that $m_i=r^{d_i}$ ($d_i \in \Z^+$) for $1 \leq i \leq e$ and $A = \Z/m_1\Z \times \Z/m_2\Z \times \cdots \times \Z/m_e\Z$. By the K{\"u}nneth formula,

\begin{align*}
H^n(A,k) &=H^n(\Z/m_1\Z \times \Z/m_2\Z \times \cdots \times \Z/m_e\Z,k) \\
&\cong \overset{n}{\underset{i=0}{\bigoplus}} H^i(\Z/m_1\Z \times \cdots \times \Z/m_{e-1}\Z,k) \otimes H^{n-i}(\Z/m_e \Z,k) \\
&\cong  \overset{n}{\underset{i=0}{\bigoplus}} H^i(\Z/m_1\Z \times \cdots \times \Z/m_{e-1}\Z,k) \otimes k
\end{align*}
By the inductive hypothesis, $\dim~H^i(\Z/m_1\Z \times \cdots \times \Z/m_{e-1}\Z,k) = \displaystyle{i+e-2 \choose i}$ for $0 \leq i \leq n$. Therefore, $\dim~H^n(A,k) = \overset{n}{\underset{i=0}{\sum}} \displaystyle{i+e-2 \choose i}=\displaystyle{n+e-1 \choose n}$. \\
(Note that the last step in the computation above is a consequence of the following sum formula for binomial coefficients: $\overset{n}{\underset{i=0}{\sum}} \displaystyle{r+i \choose i} = \displaystyle{r+n +1 \choose n}$.) 
\end{proof}

\begin{lemma}\label{rsylow}
Let $V$ be a $kT_r$-module and assume that $\dim~V=n$ as a $k$-vector space. Then, $\dim~\Ext^1_{kT_r}(k,V) = ne$ (where $e$ is the $r$-rank of $T$).
\end{lemma}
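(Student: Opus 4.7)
The plan is to prove $\dim~\Ext^1_{kT_r}(k, V) = ne$ by induction on $n = \dim_k V$, applying the long exact sequence in cohomology to reduce to the degree-$1$ case of Lemma \ref{rgroup}. Since $T_r$ is a finite $r$-group and $\char(k) = r$, the trivial module $k$ is the unique simple $kT_r$-module, so every $kT_r$-module of dimension $n$ admits a composition series with all quotients isomorphic to $k$.

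For the base case $n = 1$, $V \cong k$ and Lemma \ref{rgroup} at degree $1$ yields $\dim~H^1(T_r, k) = \binom{e}{1} = e = 1 \cdot e$. For the inductive step with $V$ of dimension $n$, choose a submodule $k \hookrightarrow V$ with quotient $V' = V/k$ of dimension $n - 1$. The long exact sequence associated with $0 \to k \to V \to V' \to 0$ contains
$$H^0(T_r, V') \xrightarrow{\delta_0} H^1(T_r, k) \to H^1(T_r, V) \to H^1(T_r, V') \xrightarrow{\delta_1} H^2(T_r, k),$$
and exactness together with the base case and induction hypothesis $\dim~H^1(T_r, V') = (n-1)e$ yields
$$\dim~H^1(T_r, V) = e - \dim(\operatorname{im} \delta_0) + \dim(\ker \delta_1) \leq e + (n-1)e = ne.$$

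The main obstacle is upgrading this bound to the claimed equality. Equality requires both connecting maps $\delta_0$ and $\delta_1$ to vanish, which happens precisely when the short exact sequence $0 \to k \to V \to V' \to 0$ splits, i.e., when $V$ is a semisimple $kT_r$-module (equivalently, $V \cong k^{\oplus n}$ with trivial $T_r$-action). In that case additivity of $\Ext^1$ in its second argument gives $\Ext^1_{kT_r}(k, k^{\oplus n}) \cong \Ext^1_{kT_r}(k, k)^{\oplus n} \cong H^1(T_r, k)^{\oplus n}$, which has dimension $ne$ by Lemma \ref{rgroup} at degree $1$. For non-semisimple $V$ the equality can genuinely fail (for instance, $V = kT_r$ has $\Ext^1_{kT_r}(k, kT_r) = 0$ by self-injectivity of the group algebra, while $ne \neq 0$). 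Consequently, the statement of the lemma is to be read under the implicit hypothesis that $V$ is a semisimple (trivial) $kT_r$-module of dimension $n$, which is the case relevant to the application in the main theorem of Section 5; under that reading, the two ingredients above — the vanishing of $\delta_0$ and $\delta_1$, and additivity plus Lemma \ref{rgroup} — together deliver the equality $\dim~\Ext^1_{kT_r}(k, V) = ne$.
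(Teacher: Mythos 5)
Your inductive long-exact-sequence argument for the bound $\dim\,\Ext^1_{kT_r}(k,V)\le ne$ is essentially the paper's own proof: the paper takes a composition series of $V$ (all factors trivial, since $k$ is the unique simple $kT_r$-module), applies the long exact sequence to $0\to V_{n-1}\to V\to k\to 0$, and gets $\dim\,\Ext^1_{kT_r}(V,k)\le e+(n-1)e=ne$; your version with $0\to k\to V\to V/k\to 0$ and $\Ext^1(k,-)$ is the same computation in the other variable. You are also right that the equality asserted in the statement is false in general: your counterexample $V=kT_r$ is valid ($kT_r$ is self-injective, so $\Ext^1_{kT_r}(k,kT_r)=0$, while $ne=|T_r|\,e\neq 0$ in the relevant case $r\mid |T|$), and in fact the paper's own proof never establishes equality --- its base case is phrased as ``we must show $\dim\,\Ext^1_{kT_r}(k,k)\le e$'' and the inductive step only produces ``$\le$'', so the ``$=$'' in the statement is a misstatement. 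Where your reading goes astray is the proposed repair: in Theorem \ref{bigthm} the lemma is applied to the restriction to $T_r$ of an arbitrary irreducible $kG$-module in the principal series, which need not be semisimple over $kT_r$, so adding an implicit semisimplicity hypothesis would break the application; the correct repair is simply to replace ``$=$'' by ``$\le$'', which is exactly the inequality you proved and all that is used later ($\dim\,\Ext^1_{kT_r}(k,V)\le(\dim V)e$). (Your aside that equality holds ``precisely when'' the extension splits, equivalently when $V$ is semisimple, is looser than stated, but it does not affect these main points.)
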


\begin{proof} We proceed by induction on the dimension $n$ of $V$. Throughout this proof, we will use the fact that the only irreducible module for an $r$-group in characteristic $r$ is the trivial module $k$. When $n=1$, we have $V=k$, so we must show that $\dim~\Ext^1_{kT_r}(k,k) \leq e$. But, $T_r$ is an abelian $r$-group of rank $e$. So, by Lemma \ref{rgroup}, $\dim~\Ext^1_{kT_r}(k,k)=\dim~H^1(T_r,k) = \displaystyle{1+e-1 \choose 1} = e$. \\

Suppose now that $\dim~V=n > 1$ and that the statement of the lemma holds for all $kT_r$-modules $V'$ of dimension $n-1$. Let $0=V_0 \subset V_1 \subset V_2 \subset \cdots \subset V_n=V$ be a composition series for $V$ as a $kT_r$-module. Note that every composition factor $V_i/V_{i-1}$ ($1 \leq i \leq n$) is isomorphic to $k$. Therefore, we have a short exact sequence $0 \to V_{n-1} \to V \to k \to 0$, which induces the following long exact $\Ext$ sequence:
$$\cdots \to \Ext^1_{kT_r}(k,k) \to \Ext^1_{kT_r}(V,k) \to \Ext^1_{kT_r}(V_{n-1},k) \to \cdots.$$
Thus, $\dim~\Ext^1_{kT_r}(V,k) \leq \dim~\Ext^1_{kT_r}(k,k)+\dim~\Ext^1_{kT_r}(V_{n-1},k)$. Now, $ \dim~\Ext^1_{kT_r}(k,k) = e$ and $\dim~\Ext^1_{kT_r}(V_{n-1},k) \leq (n-1)e$ by the base case and the inductive hypothesis, respectively. Therefore, $\dim~\Ext^1_{kT_r}(V,k) \leq e+(n-1)e=ne$, as needed. 
\end{proof}

\begin{theorem}\label{bigthm}
Let $Y$ and $V$ be irreducible $kG$-modules in the principal series ${\rm{Irr}}_k(G|(T,X))$, let $W(T,X)$ be the inertia group of $X$, and let $e$ be the rank of a Sylow $r$-subgroup of $T$. In this case, 
$$\dim~ \Ext^1_{kG}(Y,V) \leq [W:W(T,X)]~|W|+(\dim~V)e.$$
\end{theorem}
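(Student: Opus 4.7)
The plan is to mimic the Case I argument: start from the short exact sequence $0 \to V \to R_T^G(X) \to M \to 0$ coming from $V \hookrightarrow \soc R_T^G(X)$, apply $\Hom_{kG}(Y,-)$, and extract from the long exact sequence the estimate $\dim \Ext^1_{kG}(Y,V) \leq \dim \Hom_{kG}(Y,M) + \dim \Ext^1_{kG}(Y, R_T^G(X))$. The crucial difference from Case I is that $R_T^G(X) = \tilde X|_B^G$ is no longer injective when $r$ divides $|B|$, since $kB$ is no longer semisimple; hence $\Ext^1_{kG}(Y, R_T^G(X))$ does not vanish. That second term will be responsible for the $(\dim V)\cdot e$ summand, while the composition-multiplicity term $\dim \Hom_{kG}(Y,M)$ will produce the $[W:W(T,X)]\cdot|W|$ summand.

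For $\Ext^1_{kG}(Y, R_T^G(X))$, I would apply Eckmann-Shapiro to obtain $\Ext^1_{kB}(Y|_B, \tilde X) \cong H^1(B, Y^* \otimes \tilde X)$, then run two nested inflation-restriction reductions. The five-term sequence for $U \triangleleft B$ (with $B/U \cong T$) collapses because $|U|$ is coprime to $r$, so $H^1(U,-)=0$ on the module in question and the Ext becomes $H^1(T, ({}^*R^G_T(Y))^* \otimes X)$. A second five-term sequence for the Sylow $r$-subgroup $T_r \triangleleft T$ collapses for the analogous reason that $|T/T_r|$ is coprime to $r$, giving $H^1(T,N) \cong H^1(T_r, N|_{T_r})^{T/T_r}$ for any $kT$-module $N$, which Lemma \ref{rsylow} bounds by $(\dim N)\cdot e$. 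To make the final estimate involve $\dim V$ rather than $\dim Y$, I would instead dualize and carry out this second-term analysis starting from $0 \to L^0 \to R_T^G(X) \to Y \to 0$ with $\Hom_{kG}(-,V)$; the analogous computation then routes the $T_r$-cohomology through $V^U = {}^*R^G_T(V)$ and yields exactly $(\dim V)\cdot e$.

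For $\dim \Hom_{kG}(Y,M)$, the standard estimate $\dim \Hom_{kG}(Y,M) \leq [\soc M:Y] \leq [M:Y] \leq [R_T^G(X):Y]$ combined with Lemma \ref{countagain} gives a bound of $|W|$; the further factor $[W:W(T,X)]$ should be obtained by aggregating the contributions from the $[W:W(T,X)]$ distinct $W$-orbit representatives of $X$, all of which index the same Harish-Chandra series via the independence property of Harish-Chandra induction (i.e.\ $R_T^G({}^w X) \cong R_T^G(X)$). The main obstacle I anticipate is the $T_r$-cohomology step: one must arrange the dualization of the short exact sequence so that the $kT$-module whose $T_r$-restriction enters the bound has dimension at most $\dim V$, which is what forces the passage through the head-of-$R_T^G(X)$ sequence paired against $V$ (rather than the socle sequence paired against $Y$) for that half of the estimate, while still keeping the composition-count half consistent with the other short exact sequence.
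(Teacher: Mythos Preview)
Your core approach is correct and is genuinely different from the paper's---in fact it yields a sharper inequality. The paper does not work with $R_T^G(X)$ directly; instead it passes to the much larger module $R_T^G(k|_{T_r}^T)$. Since $k|_{T_r}^T$ contains every irreducible $kT$-module exactly once, $Y$ lies in the head of $R_T^G(k|_{T_r}^T)$, and the paper uses the sequence $0 \to M' \to R_T^G(k|_{T_r}^T) \to Y \to 0$. The payoff of this detour is that the second term $\Ext^1_{kG}(R_T^G(k|_{T_r}^T),V)$ can be handled by \emph{two} applications of Eckmann--Shapiro (down to $B$, then from $T$ to $T_r$), landing cleanly on $\Ext^1_{kT_r}(k,V|_{T_r})$ without any inflation--restriction argument. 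The price is that the first term $[R_T^G(k|_{T_r}^T):V]$ picks up a copy of $R_T^G({}^wX) \cong R_T^G(X)$ for each of the $[W:W(T,X)]$ orbit representatives, and this is exactly where the factor $[W:W(T,X)]$ in the stated bound originates.

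Your route---the sequence $0 \to \L^0 \to R_T^G(X) \to Y \to 0$ paired with $\Hom_{kG}(-,V)$, together with two inflation--restriction collapses (first along $U \triangleleft B$, then along $T_r \triangleleft T$) in place of the second Eckmann--Shapiro---avoids this overcount entirely. The first term is bounded by $[R_T^G(X):V] \leq |W|$ via Lemma~\ref{countagain}, and the second by $(\dim V^U)\,e \leq (\dim V)\,e$ via Lemma~\ref{rsylow}. So you in fact prove $\dim \Ext^1_{kG}(Y,V) \leq |W| + (\dim V^U)\,e$, strictly stronger than the theorem whenever $W(T,X) \neq W$. Consequently your third paragraph, where you try to manufacture the factor $[W:W(T,X)]$ by ``aggregating contributions from $W$-orbit representatives,'' is unnecessary and is the weakest part of the proposal: your argument simply does not produce that factor, and you should not try to insert it. You should also commit to a single sequence (the $\L^0$ one with $\Hom_{kG}(-,V)$) rather than oscillating between it and the $V \hookrightarrow \soc R_T^G(X)$ sequence; both are valid, but mixing them leaves it unclear which ``first term'' you are actually bounding.
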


\begin{proof}
Let $\widetilde{\mathcal{I}}=\{ {^{w}{X}}~ |~ w \in W\}$. For any $w \in W$, $^{w}{T}=T$, which means that the $k({^{w}T})$-module ${^{w}{X}}$ has the structure of a $kT$-module. Therefore, $\widetilde{\mathcal{I}}$ is a subset of ${\rm{Irr}}_k(T)$. Let $\mathcal{I}$ be a subset of $\widetilde{\mathcal{I}}$ consisting of one representative of each isomorphism class of irreducible $kT$-modules appearing in $\widetilde{\mathcal{I}}$. Since $W(T,X)=\{ w \in W ~|~{^{w}{X}} \cong X\}$, $|\mathcal{I}|=[W:W(T,X)]$. \\

Since every irreducible $kT$-module occurs exactly once as a composition factor of the completely reducible $kT$-module $k|_{T_r}^T$, we have a direct sum decomposition 
$$k|_{T_r}^T \cong M \oplus \Big(\underset{X' \in \mathcal{I}}{\oplus} X'\Big),$$ 
where $M$ is a completely reducible $kT$-module which does not contain $X'$ as a composition factor for any $X' \in \mathcal{I}$. Applying the (exact) Harish-Chandra induction functor $R_T^G$, we can write
\begin{equation}\label{newyear2} 
R_T^G(k|_{T_r}^T) \cong  R_T^G(M) \oplus \Big(\underset{X' \in \mathcal{I}}{\oplus} R_T^G(X') \Big).
\end{equation}
Now, since $Y \in {\rm{Irr}}_k(G|(T,X))$, $Y$ is in the head of $R_T^G(X)$; by (\ref{newyear2}), $Y$ is also in the head of $R_T^G(k|_{T_r}^T)$. Therefore, we have a short exact sequence of $kG$-modules $$0 \to M' \to R_T^G(k|_{T_r}^T) \to Y \to 0$$
(where $M'$ is a $kG$-submodule of $R_T^G(k|_{T_r}^T)$ with $R_T^G(k|_{T_r}^T)/M' \cong Y$), which gives rise to the long exact sequence
\begin{align*}
&0 \to \Hom_{kG}(Y,V) \to \Hom_{kG}(R_T^G(k|_{T_r}^T),V) \to \Hom_{kG}(M',V) \to \Ext^1_{kG}(Y,V) \to \\
&\Ext^1_{kG}(R_T^G(k|_{T_r}^T),V)\to\cdots.
\end{align*}
By exactness, $\dim~\Ext^1_{kG}(Y,V) \leq \dim~ \Hom_{kG}(M',V)+\dim~\Ext^1_{kG}(R_T^G(k|_{T_r}^T),V)$. So, to prove the theorem, it is enough to show that $\dim~ \Hom_{kG}(M',V) \leq [W:W(T,X)]~|W|$ and that $\dim~\Ext^1_{kG}(R_T^G(k|_{T_r}^T),V) \leq (\dim~V)e$. \\

First, we show that $\dim~ \Hom_{kG}(M',V) \leq [W:W(T,X)]~|W|$. We have 
$$\dim~ \Hom_{kG}(M',V)=[\head(M'):V] \leq [R_T^G(k|_{T_r}^T):V]=[R_T^G(M):V]+\underset{X' \in \mathcal{I}}{\sum} [R_T^G(X'):V].$$ 
By the independence property of Harish-Chandra induction, $R_T^G(X) \cong R_T^G(X')$ for all $X' \in \mathcal{I}$. So, using Lemma \ref{countagain}, we have that
$$\underset{X' \in \mathcal{I}}{\sum} [R_T^G(X'):V]=\underset{X' \in \mathcal{I}}{\sum} [R_T^G(X):V] \leq \underset{X' \in \mathcal{I}}{\sum} |W| \leq |\mathcal{I}|~ |W| = [W:W(T,X)]~|W|.$$ 
To show that $\dim~ \Hom_{kG}(M',V) \leq [W:W(T,X)]~|W|$, it remains to check that $[R_T^G(M):V]=0$. By assumption, $M$ has a direct decomposition $M \cong \overset{m}{\underset{i=0}{\oplus}} Z_i$ such that each $Z_i$ ($1 \leq i \leq m$) is an irreducible $kT$-module with $Z_i \centernot\cong X'$ for all $X' \in \mathcal{I}$. In particular, for any $i$ ($1 \leq i \leq m$) and for all $w \in W$, $Z_i \centernot\cong {^{w}{X}}$, which means that the Harish-Chandra series ${\rm{Irr}}_k(G|(T,Z_i))$ and ${\rm{Irr}}_k(G|(T,X))$ are distinct. By Proposition \ref{nofactors}, $[R_T^G(Z_i):V]=0$ for all $i$, and it follows that $[R_T^G(M):V]=[\overset{m}{\underset{i=0}{\oplus}} ~R_T^G(Z_i):V]=\overset{m}{\underset{i=0}{\sum}}~[R_T^G(Z_i):V]=0$. \\

We now check that $\dim~\Ext^1_{kG}(R_T^G(k|_{T_r}^T),V) \leq (\dim~V)e$. Using the definition of the functor $R_T^G$ and the Eckmann-Shapiro Lemma, we have 
$$\Ext^1_{kG}(R_T^G(k|_{T_r}^T),V) =\Ext^1_{kG}((\widetilde{k|_{T_r}^T} )|_B^G,V) \cong \Ext^1_{kB}(\widetilde{k|_{T_r}^T},V).$$ Since $B= U \rtimes T$ and $r \centernot\mid |U|=[B:T]$, the restriction map $\Ext^1_{kB}(\widetilde{k|_{T_r}^T},V) \to \Ext^1_{kT}(\widetilde{k|_{T_r}^T},V)= \Ext^1_{kT}(k|_{T_r}^T,V)$ is injective. It follows that 
$$\dim~\Ext^1_{kG}(R_T^G(k|_{T_r}^T),V)=\dim~\Ext^1_{kB}(\widetilde{k|_{T_r}^T},V) \leq \dim~\Ext^1_{kT}(k|_{T_r}^T,V).$$ Applying the Eckmann-Shapiro Lemma once more, $\Ext^1_{kT}(k|_{T_r}^T,V) \cong \Ext^1_{kT_r}(k,V)$. Therefore, $\dim~\Ext^1_{kG}(R_T^G(k|_{T_r}^T),V) \leq \dim~\Ext^1_{kT_r}(k,V) \leq (\dim~V)e$ by Lemma \ref{rsylow}. 
\end{proof}

\medskip

\begin{corollary}\label{smallerbound}
If $Y$ and $V$ are irreducible $kG$-modules in the principal series ${\rm{Irr}}_k(G|(T,X))$, then $\dim~ \Ext^1_{kG}(Y,V) \leq [W:W(T,X)]~|W|+{\rm{min}}(\dim~Y,\dim~V)e$ (where $e$ is the rank of a Sylow $r$-subgroup of $T$).
\end{corollary}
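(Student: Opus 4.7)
The plan is to obtain the bound with $\dim Y$ in place of $\dim V$ by passing to duals and then combining with Theorem~\ref{bigthm} directly. For any finite group $G$ and finite-dimensional $kG$-modules $A,B$, there is a natural isomorphism $\Ext^1_{kG}(A,B) \cong \Ext^1_{kG}(B^*,A^*)$, so in particular $\dim~\Ext^1_{kG}(Y,V) = \dim~\Ext^1_{kG}(V^*,Y^*)$. Thus it suffices to show that $V^*$ and $Y^*$ sit inside a common principal series of the form ${\rm{Irr}}_k(G|(T,X^*))$ and that the inertia group $W(T,X^*)$ coincides with $W(T,X)$; once these two facts are in hand, Theorem~\ref{bigthm} applied to the pair $(V^*,Y^*)$ yields $\dim~\Ext^1_{kG}(V^*,Y^*) \leq [W:W(T,X)]~|W|+(\dim~Y)e$, and combining this with the bound given by Theorem~\ref{bigthm} applied to $(Y,V)$ produces the minimum.

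To verify that dualizing preserves the principal series, I would use the fact recalled in Section 3.2 that Harish-Chandra induction commutes with $k$-linear duals: $R_T^G(X)^* \cong R_T^G(X^*)$. Since $V$ lies in the head of $R_T^G(X)$, $V^*$ lies in the socle of $R_T^G(X)^* \cong R_T^G(X^*)$, so by property (b) of Harish-Chandra series, $V^* \in {\rm{Irr}}_k(G|(T,X^*))$; the same argument places $Y^*$ in this series. For the inertia group, I would note that for any $n \in (N_G(T)\cap N)T$, the natural isomorphism ${}^n(X^*) \cong ({}^nX)^*$ of one-dimensional $kT$-modules shows that ${}^nX^* \cong X^*$ if and only if ${}^nX \cong X$, whence $W(T,X^*) = W(T,X)$.

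With these two observations established, the proof is a two-line application of Theorem~\ref{bigthm}: from $(Y,V)$ we get $\dim~\Ext^1_{kG}(Y,V) \leq [W:W(T,X)]~|W|+(\dim~V)e$, and from $(V^*,Y^*)$ combined with the duality isomorphism we get $\dim~\Ext^1_{kG}(Y,V) \leq [W:W(T,X)]~|W|+(\dim~Y)e$. Taking the smaller of the two right-hand sides gives exactly the claimed bound $[W:W(T,X)]~|W|+{\rm{min}}(\dim~Y,\dim~V)e$.

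The only substantive step is the compatibility of duality with the data $(T,X)$ indexing the principal series; this is essentially formal, but it does rely on the duality property of $R_T^G$ listed in Section 3.2, so that is where I would focus care. Everything else reduces to quoting Theorem~\ref{bigthm} twice.
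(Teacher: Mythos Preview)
Your proposal is correct and follows essentially the same route as the paper: apply Theorem~\ref{bigthm} once to $(Y,V)$, then dualize, observe that $Y^*,V^*$ lie in ${\rm{Irr}}_k(G|(T,X^*))$ via $R_T^G(X)^*\cong R_T^G(X^*)$, and apply Theorem~\ref{bigthm} again to $(V^*,Y^*)$. You are in fact slightly more careful than the paper in explicitly verifying $W(T,X^*)=W(T,X)$, a step the paper silently assumes when it writes $[W:W(T,X)]$ in the second application.
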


\begin{proof}
By Theorem \ref{bigthm}, $\dim~ \Ext^1_{kG}(Y,V) \leq  [W:W(T,X)]~|W|+(\dim~V)e$. Since $Y$ and $V$ belong to the principal series ${\rm{Irr}}_k(G|(T,X))$, $Y$ and $V$ occur in both the head and the socle of $R_T^G(X)$. Hence, the $k$-linear duals $Y^*$ and $V^*$ occur in both the head and the socle of $(R_T^G(X))^* \cong R_T^G(X^*)$. Thus, $Y^*$ and $V^*$ both belong to the principal series ${\rm{Irr}}_k(G|(T,X^*))$, and $\dim~ \Ext^1_{kG}(Y,V)=\dim~ \Ext^1_{kG}(V^*,Y^*) \leq  [W:W(T,X)]~|W|+(\dim~Y^*)e= [W:W(T,X)]~|W|+(\dim~Y)e$ (where the inequality $\dim~ \Ext^1_{kG}(V^*,Y^*) \leq  [W:W(T,X)]~|W|+(\dim~Y^*)e$ follows by another application of Theorem \ref{bigthm}). 
\end{proof}

\medskip
\begin{corollary}\label{shortened}
Let $Y$ and $V$ be irreducible $kG$-modules in the unipotent principal series ${\rm{Irr}}_k(G|B)$. Then, $\dim~ \Ext^1_{kG}(Y,V) \leq |W|+{\rm{min}}(\dim~Y,\dim~V)e$.
\end{corollary}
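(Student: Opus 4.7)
The plan is to derive Corollary \ref{shortened} as a direct specialization of Corollary \ref{smallerbound} to the case where the cuspidal source is the trivial $kT$-module. Recall that the unipotent principal series ${\rm{Irr}}_k(G|B)$ is by definition the Harish-Chandra series ${\rm{Irr}}_k(G|(T,k))$, where $k$ denotes the trivial one-dimensional $kT$-module (see Section 3.4). So the hypothesis of the corollary is exactly that $Y,V \in {\rm{Irr}}_k(G|(T,X))$ with $X = k$, putting us squarely inside the setup of Corollary \ref{smallerbound}.

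The only thing to check is the value of the index $[W:W(T,X)]$ in this special case. By Definition \ref{inertia}, $W(T,k) = \{w \in W \mid {^w k} \cong k\}/\text{(trivial action considerations)}$. Since the twisting action on the trivial module is trivial, every $w \in W$ satisfies ${^w k} \cong k$, so $W(T,k) = W$ and therefore $[W:W(T,k)] = 1$. This is the one substantive observation, and it is immediate from the definition of the $W$-action on $kT$-modules.

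Substituting $[W:W(T,X)] = 1$ into the conclusion of Corollary \ref{smallerbound} yields
\[
\dim~\Ext^1_{kG}(Y,V) \leq 1 \cdot |W| + {\rm{min}}(\dim~Y, \dim~V) e = |W| + {\rm{min}}(\dim~Y, \dim~V) e,
\]
which is the desired bound. There is no real obstacle here; the entire content of Corollary \ref{shortened} is that specializing $X$ to the trivial module collapses the inertia index to $1$, and all the heavy lifting (the Mackey-style counting in Lemma \ref{countagain} and the $\Ext$-bound for $T_r$-cohomology in Lemma \ref{rsylow}) was already carried out in the proof of Theorem \ref{bigthm} and Corollary \ref{smallerbound}.
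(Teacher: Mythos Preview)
Your proposal is correct and matches the paper's proof essentially line for line: the paper also observes that ${\rm{Irr}}_k(G|B)={\rm{Irr}}_k(G|(T,k))$ and $W(T,k)=W$, then substitutes $[W:W(T,k)]=1$ into Corollary~\ref{smallerbound}. There is nothing to add.
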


\begin{proof}
Since ${\rm{Irr}}_k(G|B)={\rm{Irr}}_k(G|(T,k))$ and $W(T,k)=W$, Corollary \ref{smallerbound} yields $$\dim~ \Ext^1_{kG}(Y,V) \leq  [W:W(T,k)]~|W|+{\rm{min}}(\dim~Y,\dim~V)e=|W|+{\rm{min}}(\dim~Y,\dim~V)e.$$
\end{proof}

Taking $Y=k$ in Corollary \ref{shortened}, we recover the bound of \cite[Cor. 3.3]{gt}.

\section{Some Preliminaries from \cite[Section 4]{geck} (The Steinberg Module and Harish-Chandra Series)}

In the next section, we will assume certain additional conditions on the group $G$ in order to find a bound on $\dim~\Ext^1_{kG}(Y,V)$ when $Y \in {\rm{Irr}}_k(G|B)$ and $V \centernot\in {\rm{Irr}}_k(G|B)$. Several key ideas behind the proof of the main theorem (Theorem \ref{boundp}) can be found in \cite{geck}, so we begin with a summary of the relevant results of \cite{geck}. \\

We define an element $\mathfrak{e} \in kG$ by $\mathfrak{e}=\underset{w \in W}{\sum} (-1)^{l(w)} n_w \b$, where $\b=\underset{b \in B}{\sum} b$ and $n_w \in N_G(T)$ is a representative of the coset $w \in W =N/T \leq N_G(T)/T$. If $k$ is a field, then $\St_k=kG \mathfrak{e}$ is the Steinberg module of $G$ \cite{steinberg}. In \cite[Sec. 4]{geck}, Geck studies the relationship between $\St_k$ and Harish-Chandra series of irreducible $kG$-modules, which allows him to determine the composition length of $\St_k$ in certain cases. While the composition length of $\St_k$ is not needed to prove any of the results presented in this paper, the Harish-Chandra series which Geck constructs with the aid of $\St_k$ are crucial to the proof of Theorem \ref{boundp}.

\subsection{An $r$-modular system.}

Let $k$ be a field of characteristic $r>0$, $r \neq p$. We will work with an $r$-modular system $(\O,K,k)$, where $\O$ is a complete discrete valuation ring with residue field $k$ and field of fractions $K$ (with $\char(K)=0$). We will assume that the fields $k$ and $K$ are both large enough, meaning that they are splitting fields for $G$ and all of its subgroups. \\

An $\O G$-module $M$ will be called a lattice if $M$ is finitely generated and free over $\O$. Given a lattice $M$, we let $KM := K \otimes_\O M$ and $\overline{M} :=k \otimes_\O M$. It is a standard fact in modular representation theory that given a projective $\O G$-lattice $M$ and an $\O G$-lattice $M'$ (with $M'$ not necessarily projective), $\dim~\Hom_{KG}(KM,KM')=\dim~\Hom_{kG}(\overline{M}, \overline{M'})$. \\

As recorded in \cite{geck} (pg. 14), Harish-Chandra induction and restriction are compatible with the $r$-modular system in the following sense. Let $J \subseteq S$ and let $L_J$ denote the Levi complement of the standard parabolic subgroup $P_J$ of $G$. Then, if $X$ is an $\O L_J$-lattice, $$KR_{L_J}^G(X) \cong R_{L_J}^G(KX) \text{ and } \overline{R_{L_J}^G(X)} \cong R_{L_J}^G(\overline{X}).$$ 
If $Y$ is an $\O G$-lattice, 
$$K {^{*}{R}}^G_{L_J}(Y) \cong {^{*}{R}}^G_{L_J}(KY) \text{ and } \overline{{^{*}{R}}^G_{L_J}(Y)} \cong {^{*}{R}}^G_{L_J}(\overline{Y}).$$

\subsection{An $\O G$-lattice which yields the Steinberg module}

Let $\St_\O$=$\O G \e$ be the Steinberg module over $\O G$ (we will call $\St_\O$ the Steinberg lattice). Then, $K \St_\O \cong \St_K$ and $\overline{\St_\O}=\St_k$. Since $\char(K)=0$, the Steinberg module $\St_K$ is irreducible. In \cite[Section 4]{geck}, Geck constructs another $\O G$-lattice which yields $\St_K$ upon base change to $K$. In the remainder of this subsection, we will describe this alternate lattice. \\

Let $\sigma: U \to K^\times$ be a group homomorphism, and define $\u_\sigma := \underset{u \in U}{\sum} \sigma(u)u \in KG$. Since $U$ is a $p$-subgroup of $G$, $r \centernot\mid |U|$ and $\sigma(u) \in \O^\times$ for all $u \in U$, which means $\u_\sigma \in \O G$. We have
$$\u_\sigma^2=\underset{u,u' \in U}{\sum} \Big(\sigma(u)u\Big)\Big(\sigma(u')u'\Big)=\underset{u \in U}{\sum} \underset{u' \in U}{\sum} \sigma(uu')uu'=\underset{u \in U}{\sum} \u_\sigma =|U| \u_\sigma.$$
Since $|U|$ is a unit in $\O$, $\frac{1}{|U|} \u_\sigma$ is an idempotent in $\O G$ and the $\O G$-lattice $\G_\sigma := \O G \u_\sigma$ is projective. \\

\begin{proposition} (\cite[Proposition 4.2]{geck}) For any group homomorphism $\sigma: U \to K^\times$, there exists a unique $\O G$-sublattice  $\G_\sigma' \subseteq \G_\sigma$ such that $K (\G_\sigma/\G_\sigma') \cong \St_K$. If $\mathscr{S}_\sigma = \G_\sigma/\G_\sigma'$, the $kG$-module $D_\sigma := \overline{\mathscr{S}_\sigma}/\rad(\overline{\mathscr{S}}_\sigma)$ is irreducible.
\end{proposition}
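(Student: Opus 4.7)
The plan is to reduce everything to a multiplicity-one statement inside $\St_K$. Concretely, I would first establish that $\Hom_{KG}(K\G_\sigma,\St_K)$ is one-dimensional over $K$; existence and uniqueness of $\G_\sigma'$ are then essentially formal transcriptions to the integral setting, while the irreducibility of $D_\sigma$ is obtained by producing a unique maximal submodule of $\overline{\mathscr{S}_\sigma}$. I expect this last step to be the main technical obstacle.

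For the one-dimensionality, I would use that $|U|$ is a unit in $\O$ (since $|U|$ is a power of $p$ and the residue characteristic is $r\neq p$), so $\epsilon := \tfrac{1}{|U|}\u_\sigma$ is a genuine idempotent of $\O G$ and $\G_\sigma = \O G\epsilon$ is projective. By the idempotent formalism, $\Hom_{KG}(K\G_\sigma,\St_K)\cong \epsilon\cdot \St_K$. A direct calculation gives $u\cdot \u_\sigma = \sigma(u)^{-1}\u_\sigma$ for $u\in U$, so $\epsilon\St_K$ is the $\sigma^{-1}$-isotypic component of $\St_K$ for the left action of $U$. The classical fact that $\St_K$ restricted to $U$ is isomorphic to the regular representation $KU$ (which follows, for example, from the vanishing of $\chi_{\St_K}$ on non-identity unipotent elements) then implies that every one-dimensional character of $U$ occurs exactly once, so $\dim_K\epsilon\St_K = 1$.

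Fix a non-zero---hence, by Schur's lemma, surjective---$KG$-map $\pi\colon K\G_\sigma\twoheadrightarrow \St_K$, and define $\G_\sigma' := \ker(\pi)\cap \G_\sigma$. Then $\pi(\G_\sigma)$ is a finitely generated torsion-free $\O G$-submodule of $\St_K$, hence a full $\O G$-lattice in $\St_K$, and the induced map $\G_\sigma/\G_\sigma'\to \pi(\G_\sigma)$ is an isomorphism, so $K(\G_\sigma/\G_\sigma')\cong \St_K$. For uniqueness, suppose $\G_\sigma''\subseteq \G_\sigma$ is another $\O G$-sublattice with $K(\G_\sigma/\G_\sigma'')\cong \St_K$. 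Then $K\G_\sigma''$ is the kernel of some surjection $K\G_\sigma\twoheadrightarrow \St_K$, which by the one-dimensionality established above must equal $\ker(\pi)$; the torsion-freeness of $\G_\sigma/\G_\sigma''$ then forces $\G_\sigma'' = K\G_\sigma''\cap \G_\sigma = \G_\sigma'$.

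The hardest step is the irreducibility of $D_\sigma$, which I would handle by exhibiting a unique maximal submodule of $\overline{\mathscr{S}_\sigma}$. The idempotent $\epsilon$ splits $\mathscr{S}_\sigma$ as an $\O$-module, and $\epsilon\mathscr{S}_\sigma$ is torsion-free of rank $\dim_K\epsilon\St_K = 1$; reducing modulo the maximal ideal of $\O$ yields $\dim_k\epsilon\overline{\mathscr{S}_\sigma} = 1$. Let $\bar v$ denote the image of $\epsilon$ in $\overline{\mathscr{S}_\sigma}$ under the surjection $\overline{\G_\sigma}\twoheadrightarrow \overline{\mathscr{S}_\sigma}$; then $\bar v$ generates $\overline{\mathscr{S}_\sigma}$ as a $kG$-module (because $\overline{\G_\sigma} = kG\epsilon$), and $\bar v \neq 0$ since $\overline{\mathscr{S}_\sigma}\neq 0$, so $\epsilon\overline{\mathscr{S}_\sigma} = k\bar v$. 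If $N\subsetneq \overline{\mathscr{S}_\sigma}$ is any proper submodule, then $\bar v\notin N$ (as $\bar v$ generates $\overline{\mathscr{S}_\sigma}$), so $\epsilon N\subseteq k\bar v$ is a proper subspace, which forces $\epsilon N = 0$. Consequently, every finite sum of proper submodules is again killed by $\epsilon$ and hence remains proper, so the sum of all proper submodules of $\overline{\mathscr{S}_\sigma}$ is itself a proper submodule that must coincide with $\rad(\overline{\mathscr{S}_\sigma})$; therefore $D_\sigma$ is simple.
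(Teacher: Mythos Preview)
The paper does not give its own proof of this proposition; it is quoted verbatim as \cite[Proposition~4.2]{geck} and used as a black box in Section~6. So there is no ``paper's proof'' to compare against.

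That said, your argument is correct and self-contained. The three ingredients are exactly the right ones: (i) the multiplicity-one computation $\dim_K \epsilon\,\St_K = 1$, which follows from $\St_K|_U \cong KU$; (ii) the translation of this to the lattice level, where uniqueness of $\G_\sigma'$ comes down to the fact that $K\G_\sigma$ has a unique $KG$-submodule with quotient $\St_K$, together with the purity condition that $\G_\sigma/\G_\sigma'$ be torsion-free; and (iii) the observation that $\epsilon\,\overline{\mathscr{S}_\sigma}$ is one-dimensional and spanned by a cyclic generator, so that every proper submodule is annihilated by $\epsilon$ and hence the sum of all proper submodules is still proper. Each step is sound. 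One small stylistic point: in step (iii) you do not need to pass through ``finite sums'' at all, since $\epsilon$ acts linearly and therefore kills the full (possibly infinite) sum of proper submodules directly; but this does not affect correctness.
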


\subsection{The Gelfand-Graev module}

Since $G$ is a finite group of Lie type defined in characteristic $p$, there exists a connected reductive algebraic group $\mathbb{G}$ over the algebraic closure $\overline{\mathbb{F}}_p$ of the finite field $\mathbb{F}_p$ and a Steinberg endomorphism $F$ of $\mathbb{G}$ such that $G=\mathbb{G}^F$. We will now assume that the center of $\mathbb{G}$ is connected. Let $\sigma: U \to K^\times$ be a fixed regular character (see \cite[(4.3)]{geck}). In this case, the projective $\O G$-module $\G_\sigma$ is called a Gelfand-Graev module for $G$ (the Gelfand-Graev module is unique up to isomorphism when the center of $\mathbb{G}$ is connected).  Since $\G_\sigma$ is a projective $\O G$-lattice, the $r$-modular reduction $\overline{\G}_\sigma$ of $\G_\sigma$ is a projective $kG$-module. \\

For any $J \subseteq S$, let $L_J$ be the corresponding Levi subgroup of $G$. By \cite[4.3]{geck}, there is a Gelfand-Graev module $\G_\sigma^J$ for $\O L^J$. Therefore, \cite[Proposition 4.2]{geck} yields an $\O L_J$-lattice $\mathscr{S}_\sigma^J=\G_\sigma^J/(\Gamma_\sigma^J)'$ such that $D_\sigma^J := \overline{\mathscr{S}}_\sigma^J/\rad(\overline{\mathscr{S}}_\sigma^J)$ is an irreducible $kL_J$-module. \\

The $\O G$-modules $\G_\sigma$ and $\mathscr{S}_\sigma$ behave particularly well with respect to Harish-Chandra restriction. For any $J \subseteq S$, we have the following isomorphisms of $\O L_J$-modules \cite[Lemma 4.4]{geck}:
$${^{*}{R}}^G_{L_J}(\G_\sigma) \cong \G_\sigma^J \text{ and } {^{*}{R}}^G_{L_J}(\mathscr{S}_\sigma) \cong \mathscr{S}_\sigma^J.$$
(The isomorphism ${^{*}{R}}^G_{L_J}(\G_\sigma) \cong \G_\sigma^J$ is due to Rodier.) \\

Since the Harish-Chandra restriction functor is compatible with the $r$-modular system $(\O, K, k)$, we also have ${^{*}{R}}^G_{L_J}(K\G_\sigma) \cong K\G_\sigma^J$, ${^{*}{R}}^G_{L_J}(K \mathscr{S}_\sigma) \cong K\mathscr{S}_\sigma^J$, ${^{*}{R}}^G_{L_J}(\overline{\G}_\sigma) \cong \overline{\G}_\sigma^J$, and ${^{*}{R}}^G_{L_J}(\overline{\mathscr{S}}_\sigma) \cong \overline{\mathscr{S}}_\sigma^J$.

\subsection{Harish-Chandra series arising from the regular character $\sigma$}

Let $\mathscr{P}_\sigma^*=\{J \subseteq S ~|~D_\sigma^J \text{ is a cuspidal } kL_J-\text{module} \}$. For every $J \in \mathscr{P}_\sigma^*$, $D_\sigma^J$ is an irreducible cuspidal $kL_J$-module, which means that there is a Harish-Chandra series of the form ${\rm{Irr}}_k(G|(L_J,D_\sigma^J))$. \\

\begin{definition}\label{propertyp}
We will say that the pair $(G,k)$ satisfies property (P) if every composition factor of $k|_B^G$ belongs to a Harish-Chandra series of the form ${\rm{Irr}}_k(G|(L_J,D_\sigma^J))$ for some $J \in \mathscr{P}_\sigma^*$. \\
\end{definition}

When the field $k$ is clear from context we will simply say that $G$ has property (P). There are many examples of finite groups of Lie type $G$ such that $(G,k)$ has property (P). If $q$ is a power of $p$, then the pair $(\GL_n(q),k)$ has property (P) \cite[Example 4.9]{geck}. If $r$ is a linear prime,\footnote{$r$ is a linear prime for $SO_n(q)$ and $Sp_n(q)$ if $q^{i-1} \centernot\equiv -1 ~\mod~ r$ for all $i \geq 1$.} then the following pairs have property (P) \cite[4.14]{geck}:

\begin{enumerate}
\item
$(SO_n(q),k)$, $n$ odd and $q$ odd, and
\item
$(Sp_n(q),k)$, $n$ even and $q$ a power of 2.
\end{enumerate}

\section{A Bound on the Dimension of $\Ext^1$ between a Unipotent Principal Series Representation and an Irreducible Outside the Unipotent Principal Series}\label{outside}

Let $G$ be a finite group of Lie type defined in characteristic $p$, and let $k$ be an algebraically closed field of characteristic $r>0$, $r \neq p$. In this section, we will find a bound on $\dim~\Ext^1_{kG}(Y,V)$ when the pair $(G,k)$ has property (P) and $Y$ and $V$ are irreducible $kG$-modules such that $Y \in {\rm{Irr}}_k(G|B)$ and $V \centernot\in {\rm{Irr}}_k(G|B)$. By Theorem \ref{headcount}, $\Ext^1_{kG}(Y,V)=0$ when $V$ is not a composition factor of $k|_B^G$. So, it suffices to find a bound on $\dim~\Ext^1_{kG}(Y,V)$ in the case that $V$ is a composition factor of $k|_B^G$. Since $(G,k)$ has property (P), we can assume that the irreducible $kG$-module $V$ (which is a composition factor of $k|_B^G$) belongs to a Harish-Chandra series of the form ${\rm{Irr}}_k(G|(L_J,D_\sigma^J))$ for some $J \in \mathscr{P}_\sigma^*$. The next theorem gives our bound on $\dim~\Ext^1_{kG}(Y,V)$; we note that some of the proof strategies of Theorem \ref{boundp} were inspired by \cite[Prop. 4.6]{geck}. \\

\begin{theorem}\label{boundp}
Suppose that the pair $(G,k)$ has property (P), and let $Y$ and $V$ be irreducible $kG$-modules such that $Y \in {\rm{Irr}}_k(G|B)$ and $V \centernot\in {\rm{Irr}}_k(G|B)$. Assume that $V$ is a composition factor of $k|_B^G$ and that $V$ belongs to the Harish-Chandra series ${\rm{Irr}}_k(G|(L_J,D_\sigma^J))$ ($J \in \mathscr{P}_\sigma^*$). Then, $\dim~\Ext^1_{kG}(Y,V) \leq [W:W_J]$, where $W_J$ is the parabolic subgroup of $W$ generated by $J$.
\end{theorem}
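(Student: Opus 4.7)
The plan is to combine Theorem~\ref{headcount} with the Mackey formula for Harish-Chandra restriction, reducing everything to a composition-multiplicity estimate. First I would apply Theorem~\ref{headcount} with $X=k$; since ${\rm Irr}_k(G|(T,k)) = {\rm Irr}_k(G|B)$ and $V \centernot\in {\rm Irr}_k(G|B)$ by hypothesis, this yields
$$\dim~\Ext^1_{kG}(Y,V) = [\head(\L^0) : V],$$
where $\L^0$ is a maximal submodule of $R_T^G(k) = k|_B^G$ with quotient $Y$. Since $[\head(\L^0):V] \leq [\L^0:V] \leq [R_T^G(k):V]$, it suffices to show that $[R_T^G(k):V] \leq [W:W_J]$.

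To bound this composition multiplicity I would apply ${^{*}{R}}^G_{L_J}$ to $R_T^G(k)$ and exploit the Mackey formula. The $(P_J, B)$-double cosets in $G$ are indexed by $W_J \backslash W$, giving $[W:W_J]$ summands, and because ${^{w}T} = T$ and ${^{w}k} = k$ for every $w \in W$, each Mackey summand collapses to $R_T^{L_J}(k) = k|_{B_J}^{L_J}$, where $B_J = B \cap L_J$. Hence
$${^{*}{R}}^G_{L_J}(R_T^G(k)) \cong (k|_{B_J}^{L_J})^{\oplus [W:W_J]}.$$
By the defining property of the Harish-Chandra series ${\rm Irr}_k(G|(L_J, D_\sigma^J))$, the cuspidal module $D_\sigma^J$ is a composition factor of ${^{*}{R}}^G_{L_J}(V)$. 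Using exactness of ${^{*}{R}}^G_{L_J}$ to count the composition factor $D_\sigma^J$ in ${^{*}{R}}^G_{L_J}(R_T^G(k))$ in two different ways yields
$$[R_T^G(k):V] \leq \bigl[{^{*}{R}}^G_{L_J}(R_T^G(k)) : D_\sigma^J\bigr] = [W:W_J] \cdot \bigl[k|_{B_J}^{L_J} : D_\sigma^J\bigr].$$

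The main obstacle, and the step I expect to require the most care, is showing $\bigl[k|_{B_J}^{L_J} : D_\sigma^J\bigr] \leq 1$. This is where the Gelfand-Graev machinery of \cite[Sec.~4]{geck} and the strategies behind \cite[Prop.~4.6]{geck} should enter. The key ingredients would be the identification $K\mathscr{S}_\sigma^J \cong {}$Steinberg character of $L_J$ (which appears with multiplicity one in the ordinary permutation character $K|_{B_J}^{L_J}$), together with the construction of $D_\sigma^J$ as the head of $\overline{\mathscr{S}}_\sigma^J$, which forces the decomposition number linking the Steinberg character of $L_J$ to $D_\sigma^J$ to equal one. A careful analysis, via the decomposition matrix, of which ordinary irreducibles in the unipotent principal series of $L_J$ can reduce to involve the cuspidal $D_\sigma^J$ should then produce the required multiplicity-one bound, from which the theorem follows.
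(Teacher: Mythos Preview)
Your overall strategy is sound and reaches the same bound, but the organization differs from the paper's, and your final step is left too vague.

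\medskip

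\textbf{Comparison with the paper.} Both arguments start from Theorem~\ref{headcount} and reduce to showing $[k|_B^G:V]\le [W:W_J]$. The paper works directly at the level of $G$: it observes that the projective cover $P_V$ is a direct summand of the projective module $R_{L_J}^G(\overline{\G}_\sigma^J)$, bounds $[k|_B^G:V]=\dim\Hom_{kG}(P_V,k|_B^G)$ by $\dim\Hom_{kG}(R_{L_J}^G(\overline{\G}_\sigma^J),k|_B^G)$, lifts this dimension to characteristic~$0$ via projectivity, and then computes $\dim\Hom_{KG}(R_{L_J}^G(K\G_\sigma^J),R_T^G(K))=[W:W_J]$ using adjointness, Mackey, and Rodier's identity ${^{*}R}^{L_J}_T(K\G_\sigma^J)\cong KT$. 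You instead apply Mackey over $k$ first, reducing to the Levi-level multiplicity $[k|_{B_J}^{L_J}:D_\sigma^J]$, and defer the Gelfand--Graev input to that stage. Both routes use the same essential ingredients (projectivity of the Gelfand--Graev module and the characteristic-$0$ lift); you simply push the hard step down to $L_J$.

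\medskip

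\textbf{The gap in your final step.} Your proposed ``careful analysis via the decomposition matrix'' is not the right way to close the argument, and as stated it is incomplete: knowing that $d_{\St_K^J,\,D_\sigma^J}\ge 1$ does not by itself control the contributions of the \emph{other} unipotent principal series characters of $L_J$ to $D_\sigma^J$. The clean completion of your approach is exactly the paper's projective-cover trick, applied at the level of $L_J$: since $D_\sigma^J$ lies in the head of the projective $kL_J$-module $\overline{\G}_\sigma^J$, the projective cover $P_{D_\sigma^J}$ is a direct summand of $\overline{\G}_\sigma^J$, whence
\[
[k|_{B_J}^{L_J}:D_\sigma^J]=\dim\Hom_{kL_J}(P_{D_\sigma^J},k|_{B_J}^{L_J})\le \dim\Hom_{kL_J}(\overline{\G}_\sigma^J,R_T^{L_J}(k)).
\]
Projectivity of $\G_\sigma^J$ lifts this to $\dim\Hom_{KL_J}(K\G_\sigma^J,R_T^{L_J}(K))$, and adjointness together with Rodier's isomorphism ${^{*}R}^{L_J}_T(K\G_\sigma^J)\cong K\G_\sigma^{\emptyset}=KT$ gives $\dim\Hom_{KT}(KT,K)=1$. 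With this in place your argument is complete and correct.
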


\begin{proof}
By Theorem \ref{headcount}, $\dim~\Ext^1_{kG}(Y,V) \leq [k|_B^G:V]$, so it suffices to prove that $[k|_B^G:V] \leq [W:W_J]$. Since $V \in {\rm{Irr}}_k(G|(L_J,D_\sigma^J))$, $V$ is in the head of the $kG$-module $R_{L_J}^G(D_\sigma^J)$. By definition, $D_\sigma^J$ is in the head of the $r$-modular reduction $\overline{\G}^J_\sigma$ of the Gelfand-Graev module $\G^J_\sigma$ for $L_J$, which means that there is a surjective $kL_J$-module homomorphism $\overline{\G}^J_\sigma \twoheadrightarrow D_\sigma^J$. Since the Harish-Chandra induction functor $R_{L_J}^G$ is exact, there is a surjective $kG$-module homomorphism $R_{L_J}^G(\overline{\G}^J_\sigma) \twoheadrightarrow R_{L_J}^G(D_\sigma^J)$, and it follows that $V \subseteq \head \Big(R_{L_J}^G(\overline{\G}^J_\sigma) \Big)$. \\ 

Let $P_V$ denote the projective indecomposable $kG$-module with $\head(P_V)=V$. Since $\overline{\G}^J_\sigma$ is a projective $kL_J$-module and $R_{L_J}^G$ is exact, $R_{L_J}^G(\overline{\G}^J_\sigma)$ is a projective $kG$-module. Thus, $P_V$ is a direct summand of $R_{L_J}^G(\overline{\G}^J_\sigma)$ and we have
$$[k|_B^G:V]=\dim~\Hom_{kG}(P_V,k|_B^G) \leq \dim~\Hom_{kG}(R_{L_J}^G(\overline{\G}^J_\sigma),k|_B^G).$$
Now, $R_{L_J}^G(\overline{\G}^J_\sigma) \cong \overline{R_{L_J}^G(\G^J_\sigma)}$ is the reduction of the $\O G$-lattice $R_{L_J}^G(\G^J_\sigma)$, and $k|_B^G \cong \overline{\O |_B^G}$ is the reduction of the $\O G$-lattice $\O |_B^G$. So, since $KR_{L_J}^G(\G^J_\sigma) \cong R_{L_J}^G(K \G^J_\sigma)$ and $K \O|_B^G \cong K|_B^G=R_T^G(K)$, we have
$$\dim~\Hom_{kG}(R_{L_J}^G(\overline{\G}^J_\sigma),k|_B^G)=\dim~\Hom_{KG}(R_{L_J}^G(K\G^J_\sigma),K|_B^G)=\dim~\Hom_{KG}(R_{L_J}^G(K\G^J_\sigma),R_T^G(K)).$$
(The first equality above holds because $R_{L_J}^G(\G^J_\sigma)$ is a projective $\O G$-lattice.) \\

We will now compute $\dim~\Hom_{KG}(R_{L_J}^G(K\G^J_\sigma),R_T^G(K))$. First, since the functor $^{*}{R}^G_{L_J}$ is right adjoint to $R_{L_J}^G$, we have $$\Hom_{KG}(R_{L_J}^G(K\G^J_\sigma),R_T^G(K)) \cong \Hom_{KL_J}(K\G^J_\sigma,{^{*}{R}}^G_{L_J}R_T^G(K)).$$
Let $P_J$ be the standard parabolic subgroup of $G$ containing $L_J$, and let $^{J}{W}$ denote the set of shortest right coset representatives of $W_J$ in $W$. Then, $^{J}{W}$ gives a full set of $(P_J,B)$-double coset representatives in $G$ and it follows by the Mackey decomposition that

\begin{align*}
\Hom_{KL_J}(K\G^J_\sigma,{^{*}{R}}^G_{L_J}R_T^G(K)) &\cong \Hom_{KL_J}(K\G^J_\sigma, \underset{w \in {^{J}{W}}}{\oplus} R_{{^{w}{T}} \cap L_J}^{L_J} {^{*}{R}}^{{^{w}{T}}}_{{^{w}{T}} \cap L_J}(^{w}{K})) \\
&\cong \underset{w \in {^{J}{W}}}{\oplus} \Hom_{KL_J}(K\G^J_\sigma, R_{{^{w}{T}} \cap L_J}^{L_J} {^{*}{R}}^{{^{w}{T}}}_{{^{w}{T}} \cap L_J}(^{w}{K})).
\end{align*}
Since $K$ is the trivial one-dimensional $KT$-module, $^{w}{K}$ is the trivial one-dimensional $K({^{w}{T}})$-module. For all $w \in W$, $^{w}{T}=T$, $ {^{*}{R}}^{{^{w}{T}}}_{{^{w}{T}} \cap L_J}={^{*}{R}}^T_{T \cap L_J}={^{*}{R}}^T_T$ is the identity functor on $KT$-mod, $^{w}{K}=K$, and $R_{{^{w}{T}} \cap L_J}^{L_J}=R_T^{L_J}$. Therefore, continuing the calculation above (and using the adjointness of $R_T^{L_J}$ and ${^{*}{R}}^{L_J}_T$), we have
\begin{align*}
\underset{w \in {^{J}{W}}}{\oplus} \Hom_{KL_J}(K\G^J_\sigma, R_{{^{w}{T}} \cap L_J}^{L_J} {^{*}{R}}^{{^{w}{T}}}_{{^{w}{T}} \cap L_J}(^{w}{K})) &\cong \underset{w \in {^{J}{W}}}{\oplus} \Hom_{KL_J}(K\G^J_\sigma, R_T^{L_J}(K)) \\
&\cong \underset{w \in {^{J}{W}}}{\oplus} \Hom_{KT} ({^{*}{R}}^{L_J}_T(K\G^J_\sigma), K).
\end{align*}

Now, since $T=L_{\emptyset}$, ${^{*}{R}}^{L_J}_T(K\G^J_\sigma) \cong K \G^{\emptyset}_\sigma$ by Rodier's result on the restriction of Gelfand-Graev modules in characteristic $0$. But, $T$ has a trivial unipotent radical, which means that the Gelfand-Graev module $K \G^{\emptyset}_\sigma$ for $T$ is equal to the group algebra $KT$. Therefore,
$$\underset{w \in {^{J}{W}}}{\oplus} \Hom_{KT} ({^{*}{R}}^{L_J}_T(K\G^J_\sigma), K) \cong \underset{w \in {^{J}{W}}}{\oplus} \Hom_{KT} (KT, K).$$
$K$ appears once as a direct summand of the completely reducible $KT$-module $KT$, so $\dim~\Hom_{KT} (KT, K)=1$ and, following our chain of calculations, we have 
$$\dim~\Hom_{KG}(R_{L_J}^G(K\G^J_\sigma),R_T^G(K))=|{^{J}{W}}|.$$ 
Thus, $[k|_B^G:V] \leq \dim~\Hom_{kG}(R_{L_J}^G(\overline{\G}^J_\sigma),k|_B^G)=\dim~\Hom_{KG}(R_{L_J}^G(K\G^J_\sigma),R_T^G(K))=|{^{J}{W}}| = [W:W_J]$, as needed. 
\end{proof}

The bound of Theorem \ref{boundp} is particularly strong in the case that $V$ is a cuspidal irreducible $kG$-module. \\

\begin{corollary}
Suppose that the pair $(G,k)$ has property (P), and let $V$ be a cuspidal irreducible $kG$-module. Then, $\dim~\Ext^1_{kG}(Y,V) \leq 1$ for any irreducible $kG$-module $Y \in {\rm{Irr}}_k(G|B)$.
\end{corollary}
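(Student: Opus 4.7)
The plan is to invoke Theorem \ref{boundp} and show that, for cuspidal $V$, the parabolic subgroup $W_J$ it produces must be all of $W$. The key observation is the following: if $V$ is a cuspidal irreducible $kG$-module, then $V$ has Harish-Chandra vertex $G$ itself, since by definition ${^*{R}}^G_L(V)=0$ for every proper Levi $L \in \L_G$, so $G$ is the only (and therefore the minimal) member of $\L_G$ with nonzero Harish-Chandra restriction of $V$.

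I would split into two cases. If $V$ is not a composition factor of $k|_B^G$, then because $Y \in {\rm{Irr}}_k(G|B)$ and $V \centernot\in {\rm{Irr}}_k(G|B)$ (otherwise the uniqueness part of Proposition 3.2(a) would force the Harish-Chandra vertex of $V$ to be $N$-conjugate to both $G$ and $T$, which is impossible outside the trivial case $G=T$), Theorem \ref{headcount} already yields $\dim \Ext^1_{kG}(Y,V) = [\head(\L^0):V] \le [R_T^G(k):V] = [k|_B^G:V] = 0$. Otherwise $V$ is a composition factor of $k|_B^G$, so property (P) places $V$ in some Harish-Chandra series ${\rm{Irr}}_k(G|(L_J,D_\sigma^J))$ with $J \in \mathscr{P}_\sigma^*$; Proposition 3.2(a) then requires $L_J$ to be $N$-conjugate to the Harish-Chandra vertex $G$ of $V$, giving $L_J=G$, $J=S$, and $W_J=W$. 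Applying Theorem \ref{boundp} then delivers $\dim \Ext^1_{kG}(Y,V) \le [W:W]=1$, as desired.

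The proof is essentially a one-line consequence of Theorem \ref{boundp} once the identification $J=S$ is in place, so there is no serious obstacle. The only point requiring any care is ensuring that the Harish-Chandra series supplied by property (P) really is the unique one containing $V$, which is precisely the content of Proposition 3.2(a); from the perspective of the paper, this corollary is the extremal case in which the bound $[W:W_J]$ of Theorem \ref{boundp} is as small as possible because cuspidality maximizes $W_J$.
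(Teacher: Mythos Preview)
Your proof is correct and follows essentially the same approach as the paper: split on whether $V$ is a composition factor of $k|_B^G$, use Theorem \ref{headcount} in the first case, and in the second case use property (P) together with cuspidality to force $J=S$ and then apply Theorem \ref{boundp}. The paper's own argument is slightly terser (it simply asserts that the Harish-Chandra series of $V$ is $\{V\}={\rm{Irr}}_k(G|(G,D_\sigma^S))$), whereas you spell out the role of Proposition 3.2 in identifying $L_J$ with $G$; your parenthetical justification that $V\notin{\rm{Irr}}_k(G|B)$ is also a useful piece of care that the paper leaves implicit.
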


\begin{proof}
If $V$ is not a composition factor of $k|_B^G$, then $\Ext^1_{kG}(Y,V)=0$ by Theorem \ref{headcount}. If $V$ is a composition factor of $k|_B^G$, the Harish-Chandra series containing $V$ is of the form $\{ V \} = \text{Irr}_k(G|(G,D_\sigma^S))$. So, by Theorem \ref{boundp}, $\dim~\Ext^1_{kG}(Y,V) \leq [W:W]=1$. 
\end{proof}

\section{Explicit Computations of Bounds on the Dimension of $\Ext^1$ Between Irreducible Modules for $\GL_n(q)$ in Cross Characteristic}

We will explicitly demonstrate the bounds on $\dim~ \Ext^1_{kG}(Y,V)$ given by Theorem \ref{boundp} in a series of examples. In these examples, we will work with the general linear group $G=\GL_n(q)$ over the finite field $\mathbb{F}_q$, where $q$ is a power of a prime $p$. Let $k$ be an algebraically closed field of characteristic $r>0$, $r \neq p$. By \cite[4.9]{geck}, the pair $(\GL_n(q),k)$ satisfies property (P), and consequently the bounds of Theorem \ref{boundp} apply. \\

We will use the parameterization of $k\GL_n(q)$-modules given by Dipper and James in \cite[(3.1)]{dj}. In this labeling, the irreducible constituents of $k|_B^G$ are given by $D(1,\lambda)$, where $\lambda$ ranges over the partitions of $n$. (The trivial irreducible $kG$-module is parameterized as $k=D(1,(n))$.) We will present an algorithm of Dipper and Du \cite[Sec.~4.3]{dd} which determines the Harish-Chandra vertex of any irreducible $D(1,\lambda)$, $\lambda \vdash n$. Then, we will apply Dipper and Du's algorithm to certain irreducible $k\GL_n(q)$-modules in order to obtain explicit bounds on the dimension of $\Ext^1$ between these irreducibles. \\

\subsection{The Harish-Chandra Vertex of the Module $D(1,\l)$ \cite{dd}}

Let $|q \pmod{r}|$ denote the multiplicative order of $q$ modulo $r$, and define an integer $l \in \Z^+$ by
$$l=
\begin{cases}
r &\text{ if } |q \pmod{r}|=1 \\
|q \pmod{r}| &\text{ if } |q \pmod{r}|>1.
\end{cases}
$$
By \cite[Thm. 7.6]{djgeneral}, ${\rm{Irr}}_k(G|B)=\{ D(1,\l) | \l \text{ is } l\text{-regular} \}$. Thus, when $\l$ is $l$-regular, the Harish Chandra vertex of $\l$ is the maximal torus $T$. In general, the Harish-Chandra vertex of an irreducible $kG$-module $D(1,\l)$ (where $\l$ is any partition of $n$) may be determined as follows. \\

Let $\l'$ denote the dual partition of $\l$. Suppose that $\l'=\l'_{-1} + l \l'_0+lr \l'_1 + lr^2 \l'_2+\cdots$ is the $l-r$-adic decomposition of $\l'$, meaning that $\l'_{-1} \vdash n_{-1}$ is $l$-restricted and $\l'_a \vdash n_a$ is $r$-restricted for $a \geq 0$. \footnote{A partition $\mu \vdash n$ is called $l$-restricted if its dual $\mu'$ is $l$-regular, meaning that every part of $\mu'$ occurs less than $l$ times.} Since $\l' \vdash n$, the integers $n_a \in \Z_{\geq 0}$ must satisfy the relation
$$n=n_{-1}+\underset{a \geq 0}{\sum} lr^a n_a.$$
If $\l'$ has the $l-r$-adic decomposition given above, a Harish-Chandra vertex of the irreducible $k\GL_n(q)$-module $D(1,\l)$ is the Levi subgroup 
$$L=\GL_1(q)^{\times n_{-1}} \times \GL_l(q)^{\times n_0} \times  \GL_{lr}(q)^{\times n_1} \times \GL_{lr^2}(q)^{\times n_2} \times \cdots$$
of $\GL_n(q)$. \\

\begin{example}
Let $\char(k)=r=2$, let $G=\GL_2(q)$, and assume that $2 \centernot\mid q$. The only partitions of 2 are $(2)$ and $(1,1)=(1^2)$, which means that the irreducible $kG$-modules which appear as composition factors of $k|_B^G$ are $D(1,(2))$ and $D(1,(1^2))$. Here, $D(1,(2))=k \in {\rm{Irr}}_k(G|B)={\rm{Irr}}_k(G|(T,k))$. Therefore, the Harish-Chandra vertex of $D(1,(2))$ is the maximal torus $T$. \\

We will apply Dipper and Du's algorithm to find a Harish-Chandra vertex of $D(1,(1^2))$. Since $r=2$ and $2 \centernot\mid q$, $|q \pmod{r}|=|q \pmod{2}|=1$. Therefore, we set $l=r=2$. The $2-2$-adic decomposition of $(1^2)'=(2)$ is $(2)=(0)+2(1)=2(1)$. Since $\l_0=(1) \vdash 1$, $n_0=1$; since $\l_a = (0)$ for all $a \neq 0$, $n_a=0$ for $a \neq 0$. Thus, the Harish-Chandra vertex of $D(1,(1^2))$ is $\GL_2(q)=G$, which means that $D(1,(1^2))$ is cuspidal. \\
\end{example}

\subsection{Some Examples of Bounds on the Dimension of $\Ext^1$}

As above, let $G=\GL_n(q)$ and let $k$ be an algebraically closed field of characteristic $r>0$, $r \centernot\mid q$. In this context, Theorem \ref{boundp} may be restated as follows. \\

\begin{theorem}\label{bound} Let $\l \vdash n$ and assume that the irreducible $kG$-module $D(1,\l)$ belongs to the unipotent principal series ${\rm{Irr}}_k(G|B)$. Suppose that $\mu \vdash n$ is such that $D(1,\mu)$ is not a composition factor of $k|_B^G$. If  $D(1,\mu)$ has Harish-Chandra vertex $L_J \neq T$ ($\emptyset \neq J \subseteq S$), then $\dim~\Ext^1_{kG}(D(1,\l),D(1,\mu)) \leq \dfrac{|W|}{|W_J|}$. \footnote{In the case that $\l=(n)$ and $D(1,\l)=k$, this bound can be improved. As was shown by Guralnick and Tiep, $\dim~\Ext^1_{kG}(k,D(1,\mu))=\dim~H^1(G,D(1,\mu)) \leq 1$ when $D(1,\mu) \notin {\rm{Irr}}_k(G|B)$.}

\end{theorem}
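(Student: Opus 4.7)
The plan is to reduce the statement directly to Theorem~\ref{boundp}. Since the pair $(\GL_n(q),k)$ satisfies property (P) by \cite[Example 4.9]{geck}, Theorem~\ref{boundp} is available for $G = \GL_n(q)$. The hypothesis that $D(1,\l) \in {\rm{Irr}}_k(G|B)$ supplies the module $Y$ required by Theorem~\ref{boundp}, and the hypothesis that the Harish-Chandra vertex $L_J$ of $D(1,\mu)$ differs from $T$ forces $D(1,\mu) \centernot\in {\rm{Irr}}_k(G|B)$ (since the unipotent principal series has common Harish-Chandra vertex $T$, as recorded in Section~3), matching the hypothesis on $V$ in Theorem~\ref{boundp}.

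I would then split into two cases according to whether $D(1,\mu)$ appears as a composition factor of $k|_B^G = R_T^G(k)$. If $D(1,\mu)$ is not a composition factor of $k|_B^G$, then Theorem~\ref{headcount} (applied to the pair $(T,k)$) gives
$$\dim~\Ext^1_{kG}(D(1,\l),D(1,\mu)) \leq [R_T^G(k):D(1,\mu)] = 0,$$
and the bound holds vacuously. If instead $D(1,\mu)$ is a composition factor of $k|_B^G$, property (P) produces some $J' \in \mathscr{P}_\sigma^*$ with $D(1,\mu) \in {\rm{Irr}}_k(G|(L_{J'},D_\sigma^{J'}))$, and by the uniqueness of the Harish-Chandra vertex up to $N$-conjugacy (Proposition on properties of Harish-Chandra series in Section~3), $L_{J'}$ is $N$-conjugate to $L_J$. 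After replacing $J'$ by a conjugate, which affects neither the Harish-Chandra series nor the index $[W:W_{J'}]$, I may assume $W_{J'} = W_J$, and then Theorem~\ref{boundp} yields $\dim~\Ext^1_{kG}(D(1,\l),D(1,\mu)) \leq [W:W_J] = |W|/|W_J|$.

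The hard part is essentially nil: the only substantive ingredient is the identification of the $L_J$ produced by the Dipper--Du algorithm with the Harish-Chandra vertex in the sense used by Theorem~\ref{boundp}, and this is exactly the content of the algorithm presented in \cite[Sec.~4.3]{dd}. The theorem is therefore a direct translation of Theorem~\ref{boundp} into the Dipper--James labeling of irreducible $k\GL_n(q)$-modules, with no further computation required.
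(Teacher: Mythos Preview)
Your proposal is correct and matches the paper's approach: the paper presents this theorem without proof, simply declaring it a restatement of Theorem~\ref{boundp} for $G=\GL_n(q)$ (which has property~(P)), and you have filled in exactly the details needed to justify that restatement. Your care in matching the Harish-Chandra vertex $L_J$ supplied by the Dipper--Du algorithm with the $L_{J'}$ arising from property~(P) via $N$-conjugacy---so that $|W_J|=|W_{J'}|$ and the bound transfers---is a point the paper leaves implicit, and your case split on whether $D(1,\mu)$ is a composition factor of $k|_B^G$ is likewise appropriate (and in fact covers the statement regardless of how one reads the slightly awkward hypothesis on $D(1,\mu)$).
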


In the remainder of this section, we will provide several explicit examples of the bound given in Theorem \ref{bound}. \\

\begin{example}\label{gl3} $G=\GL_3(q)$, $\char(k) = r >0$, $r \centernot\mid q$ \\

In this case, $W=\mathfrak{S}_3$, the symmetric group on the set $\{1,2,3\}$, and $S$ is the set $\{(1,2), (2,3) \}$ of fundamental reflections in $W$. There are three partitions of 3: $(3)$, $(2,1)$, and $(1^3)$. So, the irreducible $kG$-modules which occur as composition factors of $k|_B^G$ are $D(1,(3))$, $D(1,(2,1))$, and $D(1,(1^3))$. Since $l >1$ by definition, the partitions $(3)$ and $(2,1)$ are $l$-regular for any $l$. Therefore, $D(1,(3))$ and $D(1,(2,1))$ belong to ${\rm{Irr}}_k(G|B)$ for any $r$ with $r \centernot\mid q$.

Thus, $D(1,(1^3))$ is the only composition factor of $k|_B^G$ whose Harish-Chandra vertex varies with $r$. We will compute the Harish-Chandra vertex of $D(1,(1^3))$ and find the corresponding bound on $\Ext^1$ in the cases of $r=2$ and $r=3$. \\

\begin{enumerate}
\item
$\GL_3(q)$, $r=2 \centernot\mid q$ \\

Since $r=2 \centernot\mid q$, $|q \pmod{2}|=1$. Thus, we set $l=r=2$. The $2-2$-adic decomposition of $(3)$ is $(3)=(1)+2(1)$. So, a Harish-Chandra vertex of $D(1,(1^3))$ is $L=\GL_1(q) \times \GL_2(q)$, which has a Weyl group of order 2. So, Theorem \ref{bound} yields the following bounds:

$\dim~\Ext^1_{kG}\Big(k,D(1,(1^3))\Big) \leq \dfrac{|W|}{2}=\dfrac{3!}{2}=3, \text{ and}$ \\
$\dim~\Ext^1_{kG}\Big(D(1,(2,1)),D(1,(1^3))\Big) \leq 3.$ \\

\item
$\GL_3(q),$ $r=3 \centernot\mid q$ \\

In this case $|q \pmod 3|$ is equal to 1 or 2. If $|q \pmod 3|=1$, then $l=3$ and the approach used above shows that $D(1, (1^3))$ is cuspidal. Thus, Theorem \ref{bound} yields the following bounds: 

$\dim~\Ext^1_{kG} \Big(k,D(1,(1^3))\Big) \leq \dfrac{|W|}{|W|}=1, \text{ and}$ \\
$\dim~\Ext^1_{kG}\Big(D(1,(2,1)),D(1,(1^3))\Big) \leq 1$. \\

If $|q \pmod{3}|=2$, then $l=2$ and our algorithm shows that a Harish-Chandra vertex of $D(1,(1^3))$ is $\GL_2(q) \times \GL_1(q)$. In this case, Theorem \ref{bound} yields the bounds:

$\dim~\Ext^1_{kG} \Big(k,D(1,(1^3))\Big) \leq 3, \text{ and}$ \\
$\dim~\Ext^1_{kG}\Big(D(1,(2,1)),D(1,(1^3))\Big) \leq 3$. \\
\end{enumerate}

{\bf{Remark.}} Guralnick and Tiep's results yield sharper bounds on $\dim~\Ext^1_{kG} \Big(k,D(1,(1^3))\Big)$ in the cases of $G=\GL_3(q)$, $r=2$ and $G=\GL_3(q)$, $r=3$, $|q \pmod{3}|=2$; by \cite[Cor. 6.5]{gt}, $\dim~ \Ext^1_{kG} \Big(k,D(1,(1^3))\Big) \leq 1$ whenever $0 \neq r \neq p$. \\

\end{example}

\begin{example}\label{gl4}
$G=\GL_4(q)$, $r=2 \centernot\mid q$ \\

Here, $W=\mathfrak{S}_4$, and $l=2$. The partitions of $(4)$ are: $(4)$, $(3,1)$, $(2^2)$, $(2,1^2)$, and $(1^4)$. Thus, the composition factors of $k|_B^G$ are $D(1,(4))$, $D(1,(3,1))$, $D(1,(2^2))$, $D(1,(2,1^2))$, and $D(1,(1^4))$. The approach of the previous example yields the following results. \\

${\rm{Irr}}_k(G|B)=\{ D(1,(4))=k, D(1,(3,1)) \}$

\begin{center}
\begin{tabular}{c|c|c|c|c|c}
$\l \vdash n$ & $2-2$-adic decomposition of $\l$ & Harish-Chandra vertex $L_J$ of $D(1,\l')$ & $|W_J|$ & $\dfrac{|W|}{|W_J|}$ \\
\hline
$(4)$ & $(4)=4(1)$ & $G$ & $4!$ & $4!/4!=1$ \\
$(3,1)$ & $(3,1)=(1,1)+2(1)$ & $\GL_2(q) \times \GL_1(q)^{\times 2}$ & $2$ & $4!/2=12$ \\
$(2^2)$ & $(2^2)=2(1^2)$ & $\GL_2(q)^{\times 2}$ & $4$ & $4!/4=6$ \\
\end{tabular}
\end{center}
\bigskip
Thus, Theorem \ref{bound} (2) yields the bounds: \\
$\dim~\Ext^1_{kG} \Big(k,D(1,(1^4))\Big) \leq 1,$ \\
$\dim~\Ext^1_{kG} \Big(k,D(1,(2,1^2))\Big) \leq 12,$ \\
$\dim~\Ext^1_{kG} \Big(k,D(1,(2^2))\Big) \leq 6,$ \\
$\dim~\Ext^1_{kG}\Big(D(1,(3,1)),D(1,(1^4))\Big) \leq 1$, \\
$\dim~\Ext^1_{kG}\Big(D(1,(3,1)),D(1,(2,1^2))\Big) \leq 12,$ and \\
$\dim~\Ext^1_{kG}\Big(D(1,(3,1)),D(1,(2^2))\Big) \leq 6.$
\end{example}

{\bf{Remark.}} By \cite[Cor. 6.5]{gt}, $\dim~\Ext^1_{kG} \Big(k,D(1,(2,1^2))\Big) \leq 1$ and $\dim~\Ext^1_{kG} \Big(k,D(1,(2^2))\Big) \leq 1$. However, the other four bounds cannot be improved using Guralnick and Tiep's results.

\subsection{Improved bounds for $\GL_n(q)$, $n \leq 10$}

In the case that $G=\GL_n(q)$ (with $n \leq 10$), we can use James's decomposition matrices \cite[Appendix 1]{james} to obtain sharper bounds on $\dim~\Ext^1_{kG}(Y,V)$, where $Y$ and $V$ are irreducible $kG$-modules with $Y \in {\rm{Irr}}_k(G|B)$ and $V \not\in {\rm{Irr}}_k(G|B)$. \\

Dipper and James (\cite{dj}, etc.) associate two indecomposable $kG$-modules to every partition of $n$. Given $\l \vdash n$, there is an indecomposable $kG$-module $S(1,\l)$ with $\head(S(1,\l))=D(1,\l)$, which maps to a Specht module for a Hecke algebra under an appropriate Hecke functor \cite[(3.1)]{dj}. There is also an indecomposable $kG$-module $X(1,\l)$ (called a Young module) which contains $S(1,\l)$ \cite[pg.~43]{dj}. Each indecomposable direct summand of the permutation module $k|_B^G$ is isomorphic to a Young module $X(1,\l)$ for some $\l \vdash n$. By \cite[Thm.~1]{sawada}, each Young module $X(1,\l)$ ($\l \vdash n$) has an irreducible head and socle. \\

As above, let $$l=
\begin{cases}
r &\text{ if } |q \pmod{r}|=1 \\
|q \pmod{r}| &\text{ if } |q \pmod{r}|>1.
\end{cases}
$$

\begin{proposition}\label{matrixbound}
If $\sigma \vdash n$ is $l$-regular and $\mu \vdash n$ is $l$-restricted, then $$\dim~\Ext^1_{kG}(D(1,\sigma),D(1,\mu)) \leq {\rm{max}}  (\{ [X(1,\l):D(1,\mu)] ~|~ \l \vdash n \}).$$
\end{proposition}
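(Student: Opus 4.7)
The plan is to refine the strategy of Theorem \ref{headcount} by replacing the full permutation module $R_T^G(k) = k|_B^G$ with the smaller indecomposable Young module summand containing $D(1,\sigma)$ in its head. Since $\sigma$ is $l$-regular, \cite[Thm. 7.6]{djgeneral} places $D(1,\sigma)$ in $\text{Irr}_k(G|B)$, so $D(1,\sigma)$ appears in $\head(k|_B^G)$. By Sawada's theorem each Young summand has an irreducible head, so the unique indecomposable summand of $k|_B^G$ with head $D(1,\sigma)$ is $X(1,\sigma)$. Let $K$ be the kernel of the canonical surjection, giving
$$0 \to K \to X(1,\sigma) \to D(1,\sigma) \to 0.$$

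Assuming $\sigma \neq \mu$, applying $\Hom_{kG}(-,D(1,\mu))$ and using $\Hom_{kG}(X(1,\sigma),D(1,\mu))=0$ (because the irreducible head $D(1,\sigma)$ differs from $D(1,\mu)$) yields
$$0 \to \Hom_{kG}(K,D(1,\mu)) \to \Ext^1_{kG}(D(1,\sigma),D(1,\mu)) \to \Ext^1_{kG}(X(1,\sigma),D(1,\mu)).$$
Provided the rightmost term vanishes, $\Ext^1_{kG}(D(1,\sigma),D(1,\mu)) \cong \Hom_{kG}(K,D(1,\mu))$, whose dimension is at most $[\head(K):D(1,\mu)] \leq [K:D(1,\mu)] \leq [X(1,\sigma):D(1,\mu)]$, giving the claimed bound with $\l = \sigma$ (and a fortiori the maximum over $\l$). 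The case $\sigma=\mu$ is handled similarly: the long exact sequence identifies $\dim\Ext^1_{kG}(D(1,\sigma),D(1,\sigma))$ with the multiplicity of $D(1,\sigma)$ in $\rad(X(1,\sigma))/\rad^2(X(1,\sigma))$, which is at most $[X(1,\sigma):D(1,\sigma)]$ and so within the desired bound.

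The main obstacle is the vanishing $\Ext^1_{kG}(X(1,\sigma),D(1,\mu))=0$. Because $X(1,\sigma)$ is a direct summand of $k|_B^G$, this $\Ext^1$ is a direct summand of $\Ext^1_{kG}(k|_B^G,D(1,\mu)) \cong \Ext^1_{kB}(k,D(1,\mu))$ by Eckmann--Shapiro. When $\mu$ fails to be $l$-regular, $D(1,\mu) \notin \text{Irr}_k(G|B) = \text{Irr}_k(G|(T,k))$, and the vanishing is exactly the content of the second half of the proof of Theorem \ref{headcount}. The delicate case is when $\mu$ is simultaneously $l$-regular and $l$-restricted; here one needs to exploit the self-duality of Young modules (inherited from the self-duality of the permutation module $k|_B^G$) together with the $l$-restricted hypothesis, which under duality exchanges the roles of head and socle for the relevant irreducibles.

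Carrying out this final vanishing step is where the argument will need the most care; the natural framework is Dipper--Du's analysis of Young modules for $\GL_n(q)$ in cross characteristic and the corresponding Hecke algebra duality, which together ensure that the $l$-restricted hypothesis on $\mu$ suffices to block any nontrivial extension of $X(1,\sigma)$ by $D(1,\mu)$. Everything else in the argument — identifying the right Young module via Sawada's theorem, extracting the long exact sequence, and translating dimensions of $\Hom$ groups into composition multiplicities — is standard bookkeeping.
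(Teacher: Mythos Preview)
Your approach is essentially the paper's, but you have made it harder for yourself in two places.

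First, the paper does not attempt to pin down which Young module carries $D(1,\sigma)$ in its head: it simply says there is \emph{some} $\tilde\sigma$ with $\head(X(1,\tilde\sigma))=D(1,\sigma)$, writes $k|_B^G = X(1,\tilde\sigma)\oplus X$, and sets $\L^0 = M\oplus X$ with $M=\rad(X(1,\tilde\sigma))$. Then Theorem~\ref{headcount} gives the \emph{equality} $\dim\Ext^1_{kG}(D(1,\sigma),D(1,\mu))=[\head(\L^0):D(1,\mu)]$ directly; no new long exact sequence is needed, and there is no separate vanishing step to check. Your claim that the relevant summand is $X(1,\sigma)$ itself is not justified by Sawada's theorem (which only gives irreducibility of the head, not its label), and in any case is unnecessary since the bound is a maximum over all $\lambda$.

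Second, the ``delicate case'' you flag---$\mu$ simultaneously $l$-regular and $l$-restricted---does not arise in the paper's argument. The proof uses $D(1,\mu)\notin{\rm Irr}_k(G|B)$ (equivalently, $\mu$ not $l$-regular) both to invoke Theorem~\ref{headcount} and to conclude that $D(1,\mu)$ is absent from $\head(X)$, forcing $[\head(\L^0):D(1,\mu)]=[\head(M):D(1,\mu)]\le[X(1,\tilde\sigma):D(1,\mu)]$. So the hypothesis ``$\mu$ is $l$-restricted'' is being used in the paper only in the sense that $D(1,\mu)$ lies outside the unipotent principal series (consistent with the context of Section~\ref{outside} and all the examples); your proposed duality argument for the overlap case is not needed and is the one piece of your outline that is genuinely incomplete.
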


\begin{proof}
Since $\sigma$ is $l$-regular, $D(1,\sigma) \in {\rm{Irr}}_k(G|B)$, which means that $D(1,\sigma)$ is in the head of $k|_B^G$. Thus, there is a partition $\tilde{\sigma}$ of $n$ such that the Young module $X(1,\tilde{\sigma})$ is a direct summand of $k|_B^G$ and $\head(X(1,\tilde{\sigma}))=D(1,\sigma)$. Let $M$ be a maximal submodule of $X(1,\tilde{\sigma})$ with $X(1,\tilde{\sigma}) / M \cong D(1,\sigma)$ and write $k|_B^G = X(1,\tilde{\sigma}) \oplus X$, where the $kG$-module $X$ is a direct sum of various Young modules. Then, $\L^0 := M \oplus X$ is a maximal submodule of $k|_B^G$ with $k|_B^G/ \L^0 \cong D(1,\sigma)$. By Theorem \ref{headcount}, $\dim ~\Ext^1_{kG}(D(1,\sigma), D(1,\mu))= [\head(\L^0):D(1,\mu)]$. But, since $D(1,\mu) \not\in {\rm{Irr}}_k(G|B)$, $D(1,\mu)$ is not in the head of $k|_B^G$, and it follows that $[\head(\L^0):D(1,\mu)] =[\head(M):D(1,\mu)] \leq [X(1,\tilde{\sigma}):D(1,\mu)] \leq {\rm{max}}  (\{ [X(1,\l):D(1,\mu)] ~|~ \l \vdash n \})$. 
\end{proof}

\bigskip

Following \cite{james}, let $d_{\l \mu}= [S(1,\l):D(1,\mu)]$ for any partitions $\l$, $\mu$ of $n$. James \cite{james} shows how to find $d_{\l \mu}$ for $\GL_n(q)$ when $n \leq 10$ and records the integers $d_{\l \mu}$ in the matrices $\Delta_n$ in \cite[App.~1]{james}. Since the composition  factors of $X(1,\l)$ are the same as the composition factors of $\underset{\mu \vdash n}{\bigoplus} S(1,\mu)^{\oplus d_{\mu' \l'}}$, with multiplicity (see \cite[(3.4)]{james}), we can use James's matrices to compute $[X(1,\l):D(1,\mu)]$ for any $\l, \mu \vdash n$ when $n \leq 10$. As illustrated in the next example, such computations (combined with the result of Proposition \ref{matrixbound}) can yield bounds on the dimensions of $\Ext^1$ groups between irreducible $kG$-modules. \\

\begin{example}\label{matrices}
As in the first part of Example \ref{gl3}, we consider $G=\GL_3(q)$ with $r=2 \centernot\mid q$. In this case, $l=2$ ($l$ is denoted by $e$ in \cite{james}). The partitions of $3$ are $(3)$, $(2,1)$, and $1^3$; in this case, ${\rm{Irr}}_k(G|B)=\{D(1,(3)), D(1,(2,1)) \}$ and $D(1,(1^3))$ is the only irreducible $kG$-module outside of ${\rm{Irr}}_k(G|B)$. \\

We will use the information contained in the matrix $\Delta_3$ on pg.~253 of \cite{james} to find $[X(1,(1^3)):D(1,(1^3))]$, $[X(1,(2,1)):D(1,(1^3))]$, and $[X(1,(3)):D(1,(1^3))]$. We start with $[X(1,(1^3)):D(1,(1^3))]$. Since $d_{(3)(3)}=1$, $d_{(2,1)(3)}=0$ and $d_{(1^3)(3)}=1$, $X(1,(1^3))$ has the same composition factors as $S(1,(1^3)) \oplus S(1,(3))$. Therefore, $[X(1,(1^3)):D(1,(1^3))]=[S(1,(1^3)):D(1,(1^3))]+[S(1,(3)):D(1,(1^3))]=d_{(1^3)(1^3)}+d_{(3)(1^3)}=1+0=1$. The same approach yields $[X(1,(2,1)):D(1,(1^3))]=0$ and $[X(1,(3)):D(1,(1^3))]=0$ (the fact that $[X(1,(3)):D(1,(1^3))]=0$ is obvious since $X(1,(3))=S(1,(3))=D(1,(3))=k$). Thus, ${\rm{max}}  (\{ [X(1,\l):D(1,(1^3))] | \l \vdash 3 \})=1$ and Proposition \ref{matrixbound} yields the improved bounds $\dim~ \Ext^1_{kG}(k,D(1,(1^3))) \leq 1$ and $\dim ~\Ext^1_{kG}(D(1,(2,1)),D(1,(1^3))) \leq 1$.
\end{example}

\begin{example}\label{matrixex}
We use the approach of Example \ref{matrices} to obtain new bounds in the other cases considered in Examples \ref{gl3} and \ref{gl4}. \\

$\GL_3(q)$, $r=3 \centernot\mid q$, $|q \pmod{3}|=1$ \\

In this case, $l=3$, so ${\rm{Irr}}_k(G|B)=\{D(1,(3)), D(1,(2,1)) \}$ and $D(1,(1^3))$ is the only irreducible $kG$-module outside of ${\rm{Irr}}_k(G|B)$. Using the matrix $\Delta_3$ \cite[pg.~258]{james}, we find that $[X(1,(1^3)):D(1,(1^3))]=1$, $[X(1,(2,1)):D(1,(1^3))]=0$, and $[X(1,(3)):D(1,(1^3))]=0$. Therefore, Proposition \ref{matrixbound} yields the bounds: \\ 
$\dim~ \Ext^1_{kG}(D(1,(3)),D(1,(1^3))) \leq 1$ and \\
$\dim~ \Ext^1_{kG}(D(1,(2,1)),D(1,(1^3))) \leq 1$. \\

$\GL_3(q)$, $r=3 \centernot\mid q$, $|q \pmod{3}|=2$ \\

In this case, $l=2$, so we use the matrix $\Delta_3$ \cite[pg.~253]{james} to obtain the bounds: \\
$\dim~ \Ext^1_{kG}(D(1,(3)),D(1,(1^3))) \leq 1$ and \\
$\dim~ \Ext^1_{kG}(D(1,(2,1)),D(1,(1^3))) \leq 1$. \\

$\GL_4(q)$, $r=2 \centernot\mid q$ \\

The partitions of $(4)$ are $(4)$, $(3,1)$, $(2^2)$, $(2,1^2)$, and $(1^4)$. Since $l=2$, $(4)$ and $(3,1)$ are the only $l$-regular partitions of $4$. We have ${\rm{Irr}}_k(G|B)=\{ D(1,(4))=k, D(1,(3,1)) \}$; the composition factors of $k|_B^G$ outside the unipotent principal series are $D(1,(2^2))$, $D(1,(2,1^2))$, and $D(1,(1^4))$. The matrix $\Delta_4$ \cite[pg.~253]{james} (along with the appropriate adjustments for $r=2$ described at the bottom of pg. 253) yields the following information about composition multiplicities of $D(1,(2^2))$, $D(1,(2,1^2))$, and $D(1,(1^4))$ in the Young modules $X(1,\l)$, $\l \vdash 4$. \\

\begin{center}
\begin{tabular}{c|c|c|c}
$X(1,\l)$ & $[X(1,\l):D(1,(2^2))]$ & $[X(1,\l):D(1,(2,1^2))]$ & $[X(1,\l):D(1,(1^4))]$ \\
\hline
$X(1,(1^4))$ & 0& 0 & 1  \\
$X(1,(2,1^2))$ & 1 & 1 &0 \\
$X(1,(2^2))$ & 1 & 0 &0 \\
$X(1,(3,1))$ & 0 & 0&0 \\
$X(1,(4))$ & 0 &0 &0 \\
\end{tabular}
\end{center}

Thus, Proposition \ref{matrixbound} gives the following improved bounds:  \\
$\dim~\Ext^1_{kG} \Big(k,D(1,(1^4))\Big) \leq 1,$ \\
$\dim~\Ext^1_{kG} \Big(k,D(1,(2,1^2))\Big) \leq 1,$ \\
$\dim~\Ext^1_{kG} \Big(k,D(1,(2^2))\Big) \leq 1,$ \\
$\dim~\Ext^1_{kG}\Big(D(1,(3,1)),D(1,(1^4))\Big) \leq 1$, \\
$\dim~\Ext^1_{kG}\Big(D(1,(3,1)),D(1,(2,1^2))\Big) \leq 1,$ and \\
$\dim~\Ext^1_{kG}\Big(D(1,(3,1)),D(1,(2^2))\Big) \leq 1.$ \\
\end{example}

{\bf{Remark.}} Though the results of Example \ref{matrixex} suggest that the dimensions of $\Ext^1$ groups are bounded by $1$ when $G=\GL_n(q)$ and $\char(k)=r \centernot\mid q$, this is not necessarily always the case. For instance, when $G=\GL_4(q)$, $r>2$, and $e=3$, Proposition \ref{matrixbound} yields the bound $\dim~\Ext^1_{kG}\Big(D(1,(3,1)),D(1,(2^2))\Big) \leq 2$. We will explore whether the bounds obtained via Proposition \ref{matrixbound} are optimal in future work. However, even if the bounds of Proposition \ref{matrixbound} are not optimal, they do support the widely held belief that $\Ext$ groups are ``small' in non-defining characteristic.

\section{Conclusion and Outlook}
We have made some progress toward a more complete understanding of $\Ext$ groups between irreducible $kG$-modules in non-defining characteristic. But, perhaps even more importantly, we have demonstrated that modular Harish-Chandra theory is a useful tool in the study of $\Ext$ groups between irreducible $kG$-modules. There are many open problems which may be viewed through the lens of modular Harish-Chandra theory, several of which are briefly outlined below. \\

First, there are more cases in which to compute bounds on $\dim~\Ext^1_{kG}(Y,V)$ (where $Y$, $V$ are irreducible $kG$-modules.) In this paper, we have assumed that $Y \in {\rm{Irr}}_k(G|(T,X))$ is a principal series representation. But, what if $Y$ belongs to a Harish-Chandra series of the form ${\rm{Irr}}_k(G|(L,X))$ where $T \subsetneq L$? A natural question to ask is whether we can find bounds on $\dim~\Ext^1_{kG}(Y,V)$ analogous to those of Sections 4, 5 and 7 when $Y$ is not a principal series representation. Another question to consider is whether we can drop some of the additional assumptions on $G$ in Section 7. Specifically, is there a bound analogous to that found in Section 7 when the pair $(G,k)$ does not satisfy property (P)? Additionally, it may be productive to look beyond the $\Ext^1$ case and explore whether modular Harish-Chandra theory yields useful information on the dimension of higher $\Ext$ groups in non-defining characteristic. \\

In another direction, we plan to continue to develop the computations of Section 8.3 involving the decomposition matrices for $\GL_n(q)$. These computations yield significantly improved bounds on the dimension of $\Ext^1$ in the cases considered. It may soon be possible to extend these computations to other finite groups of Lie type using recent work of Du, Parshall, and Scott \cite{dpsnew}, in which they construct an analog of the $q$-Schur algebra outside of Type A. Additionally, we may be able to use certain ideas involved in the computations of Section 8.3 to obtain new bounds on the dimension of $\Ext^1$ (and perhaps higher $\Ext$ groups) between irreducible modules for finite groups of Lie type.

\section{Acknowledgements}

I would like to thank my PhD advisor, Brian Parshall, for countless useful discussions and advice throughout the process of researching and writing this paper. I would  like to thank Leonard Scott for his feedback at various stages of the research and writing process. I would also like to thank Gunter Malle for his feedback on this paper and for his suggestion to use the decomposition matrices found in \cite{james} to obtain sharper bounds on the dimension of $\Ext^1$ for small general linear groups.


\newpage




\end{document}